\documentclass[11pt, reqno]{amsart}
\usepackage{amsmath, enumerate}
\usepackage{amsfonts}
\usepackage{amssymb}
\usepackage{amsthm}
\usepackage{graphicx}
\usepackage{soul}
\usepackage[usenames, dvipsnames]{color} 
\usepackage{hyperref}
\setlength{\headheight}{15pt} \setlength{\topmargin}{10pt}
\setlength{\headsep}{30pt} 
\setlength{\textwidth}{16cm} \setlength{\textheight}{21.5cm}
\setlength{\oddsidemargin}{0.5cm} 
\setlength{\evensidemargin}{0.5cm} 

\newtheorem{Theorem}{Theorem}[section]
\newtheorem{Corollary}[Theorem]{Corollary}
\newtheorem{Lemma}[Theorem]{Lemma}
\newtheorem{Proposition}[Theorem]{Proposition}

\theoremstyle{definition}

\theoremstyle{remark}

\def \N {\mathbb{N}}
\def \l {\lambda}
\def \O {\Omega}

\def \R {\mathbb{R}}
\def \Rn{\mathbb{R}^n}
\def \K {\mathcal{K}}

\def \e {\epsilon}

\def \a {\alpha}

\def \la {\left\langle }
\def \ra {\right\rangle}

\def \dist{\mathrm{dist}}

\def \tr{\mathrm{tr}}
\def \det{\mathrm{det}}

\def \ra{\right\rangle}
\def \la{\left\langle}

\def\lb{\left\{}
\def\rb{\right\}}

\setcounter{section}{0}

\begin{document}

\title[Inverse Iteration for Monge-Amp\`ere Eigenvalue]{Inverse Iteration for the Monge-Amp\`ere Eigenvalue Problem}
\author[F. Abedin and J. Kitagawa]{Farhan Abedin$^*$ and Jun Kitagawa$^{**}$} 
\address{Department of Mathematics, Michigan State University, East Lansing, MI 48824}
\email{abedinf1@msu.edu}
\address{Department of Mathematics, Michigan State University, East Lansing, MI 48824}
\email{kitagawa@math.msu.edu}
\subjclass[2010]{35J96, 35P30}
\thanks{$^{**}$JK's research was supported in part by National Science Foundation grant DMS-1700094. \\
\hphantom{11l}$^*$ Corresponding author: abedinf1@msu.edu}
\begin{abstract}
We present an iterative method based on repeatedly inverting the Monge-Amp\`ere operator with Dirichlet boundary condition and prescribed right-hand side on a bounded, convex domain $\O \subset \Rn$. We prove that the iterates $u_k$ generated by this method converge as $k \to \infty$ to a solution of the Monge-Amp\`ere eigenvalue problem
$$\begin{cases}
\det D^2u = \l_{MA} (-u)^n & \quad \text{in } \O,\\
u = 0 & \quad \text{on } \partial \O.
\end{cases}$$
Since the solutions of this problem are unique up to a positive multiplicative constant, the normalized iterates $\hat{u}_k := \frac{u_k}{||u_k||_{L^{\infty}(\O)}}$ converge to the eigenfunction of unit height.
In addition, we show that $\lim\limits_{k \to \infty} R(u_k) = \lim\limits_{k \to \infty} R(\hat{u}_k)  =  \l_{MA}$, where the Rayleigh quotient $R(u)$ is defined as
$$R(u) := \frac{\int_{\O} (-u) \ \det D^2u}{\int_{\O} (-u)^{n+1}}.$$
Our method converges for a wide class of initial choices $u_0$ that can be constructed explicitly, and does not rely on prior knowledge of the Monge-Amp\`ere eigenvalue $\l_{MA}$.
\end{abstract}

\maketitle

\section{Introduction and Main Result}\label{sec:intro}

Let $\O \subset \Rn$ be a bounded, convex domain. The Monge-Amp\`ere eigenvalue problem seeks to find a convex function $u \in C^2(\O) \cap C(\overline{\O})$ and a positive number $\l$ such that
\begin{equation}\label{eigenvalueproblemforMA}
\begin{cases}
\det D^2u = \l (-u)^n & \quad \text{in } \O,\\
u = 0 & \quad \text{on } \partial \O.
\end{cases}
\end{equation}
This problem was first considered by Lions in \cite{Lions85}, who proved the following result. 
\begin{Theorem} [Lions '85]\label{Lions} Assume $\O \subset \Rn$ is a smooth, bounded, uniformly convex domain. There exist a unique positive constant $\l_{MA}$ and a unique (up to positive multiplicative constants) non-zero convex function $u \in C^{1,1}(\overline{\O}) \cap C^{\infty}(\O)$ solving the eigenvalue problem \eqref{eigenvalueproblemforMA}.
\end{Theorem}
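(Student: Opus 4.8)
The plan is to realize $\l_{MA}$ as the minimum of the Rayleigh quotient $R$, construct the eigenfunction by the direct method, and settle uniqueness by a scaling-and-comparison argument. The standing tools are: (i) the classical solvability and regularity theory for the Dirichlet problem $\det D^2 v = f$ in $\O$, $v = 0$ on $\bO$ --- for smooth positive $f$ there is a unique convex solution $v \in C^\infty(\O) \cap C^{1,1}(\overline\O)$ (Caffarelli--Nirenberg--Spruck, Pogorelov), and for continuous positive $f$ a unique Aleksandrov solution depending continuously on $f$ and on $\O$; (ii) the comparison principle for the Monge--Amp\`ere operator on convex functions; and (iii) the weak-$*$ stability of Monge--Amp\`ere measures under locally uniform convergence of convex functions.

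\emph{Existence.} I would set
$$
\l_{MA} := \inf\left\{ \frac{\int_{\O}(-u)\,\det D^2 u}{\int_{\O}(-u)^{n+1}} \ :\ u \in C(\overline\O) \text{ convex},\ u \le 0,\ u|_{\bO} = 0,\ u \not\equiv 0\right\},
$$
with $\det D^2 u$ read as a Monge--Amp\`ere measure; an Aleksandrov-type inequality shows $\l_{MA} > 0$. For a minimizing sequence normalized by $\|u_k\|_{L^\infty(\O)} = 1$, the comparison principle together with barriers built from the uniform convexity of $\O$ (this is where the hypotheses on $\O$ enter) yields a uniform modulus of continuity up to $\bO$, so a subsequence converges uniformly on $\overline\O$ to a convex $u_* \le 0$ with $u_*|_{\bO} = 0$; by (iii) the Monge--Amp\`ere measures converge weakly-$*$ and, since $-u_k \to -u_*$ uniformly, both numerator and denominator pass to the limit (the latter staying positive), so $u_*$ is a nonzero minimizer. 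A first-variation computation --- integrating by parts twice, using that the cofactor matrix of $D^2 u$ is divergence-free and $\tr(\mathrm{cof}(D^2 u)\,D^2 u) = n\det D^2 u$ --- gives the first variations $-(n+1)\det D^2 u$ and $-(n+1)(-u)^n$ of $\int_\O(-u)\det D^2 u$ and $\int_\O(-u)^{n+1}$, so the Euler--Lagrange equation is $\det D^2 u_* = \l_{MA}(-u_*)^n$, which $u_*$ solves in the Aleksandrov sense. (As $u_*$ is a priori only convex, passing from stationarity to this equation requires a short approximation step: mollify the Monge--Amp\`ere measure of $u_*$, solve the associated smooth Dirichlet problems, and transfer stationarity using continuity of the functionals under uniform convergence.) Caffarelli's interior regularity upgrades $u_*$ to $C^\infty(\O)$, and the global theory on smooth uniformly convex domains with bounded right-hand side and zero boundary data gives $u_* \in C^{1,1}(\overline\O)$. (Alternatively, existence follows from a nonlinear Krein--Rutman theorem for the positively $1$-homogeneous, order-preserving, compact operator $\Phi(u) := S\big((-u)^n\big)$, where $Sf$ denotes the solution of the Dirichlet problem above and $u$ solves \eqref{eigenvalueproblemforMA} exactly when $\Phi(u) = \l^{-1/n} u$.)

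\emph{Uniqueness.} Every eigenfunction $u$ satisfies $-C\dist(\cdot,\bO) \le u \le -c\,\dist(\cdot,\bO)$ near $\bO$ with $c, C > 0$: the upper bound follows from $u \in C^{1,1}(\overline\O)$ and $u|_{\bO} = 0$, and the lower bound because $u$ lies below the cone with vertex at its minimum point and base $\bO \times \{0\}$, which decays exactly linearly. Let $u_1 = u_*$ and let $u_2$ be any eigenfunction with eigenvalue $\l$; the quotient forces $\l \ge \l_{MA}$. By the linear boundary decay, $\gamma := \sup\{t > 0 : u_2 \le t u_1 \text{ in } \overline\O\} \in (0,\infty)$, and $u_2 \le \gamma u_1$. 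If equality holds at an interior point $x_0$, then $D^2 u_2(x_0) \le \gamma D^2 u_1(x_0)$ with both matrices positive (interior strict convexity, Caffarelli), so evaluating the equations at $x_0$ gives $\l \le \l_{MA}$, hence $\l = \l_{MA}$; the nonnegative function $w := \gamma u_1 - u_2$ then vanishes at $x_0$ and, by concavity of $v \mapsto (\det D^2 v)^{1/n}$ (linearizing the common equation at $\gamma u_1$) together with $w \ge 0$, is a supersolution of a uniformly elliptic operator on every compact subset of $\O$, so the strong maximum principle forces $w \equiv 0$, i.e. $u_2 = \gamma u_1$. If instead $u_2 < \gamma u_1$ throughout $\O$, then $\inf_{\overline\O}(u_2/u_1)$ is attained only at some $y_0 \in \bO$, forcing $\partial_\nu(\gamma u_1 - u_2)(y_0) = 0$; a Hopf-type boundary argument then contradicts $u_2 < \gamma u_1$ in $\O$, so this case does not occur. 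Thus $\l = \l_{MA}$ and $u_2 = \gamma u_1$, which proves uniqueness of the eigenvalue and of the eigenfunction up to a positive multiplicative constant.

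\emph{Main obstacle.} The interior part of the uniqueness argument (scaling-comparison plus strong maximum principle) is clean, but the boundary part is delicate: the right-hand side $\l(-u)^n$ vanishes on $\bO$, so the linearized Monge--Amp\`ere operator degenerates there, and the Hopf-type step and the analysis of the ratio $u_2/u_1$ near $\bO$ must be carried out carefully, relying on the precise linear boundary decay and on the uniform convexity of $\O$. A secondary technical point is the approximation needed to derive the Euler--Lagrange equation from a minimizer that is a priori only convex (and, on the Krein--Rutman route, verifying the strong-positivity hypothesis of that theorem).
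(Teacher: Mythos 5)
This statement is quoted background (Lions '85): the paper itself gives no proof, only recalling Lions' construction via the parametrized problems \eqref{LionsEquation} and the definition $\l_{MA}=\bigl(\inf_{A\in\mathcal{A}}\l^1_A\bigr)^n$. Your proposal instead follows the variational route of Tso and Le (Theorems \ref{Tso} and \ref{Le}), plus a scaling/comparison uniqueness argument. The interior part of that uniqueness argument is correct and standard: with $\gamma$ maximal so that $u_2\le\gamma u_1$, an interior touching point forces $\l=\l_{MA}$, and linearizing via concavity of $(\det)^{1/n}$ at $\gamma u_1$ makes $w=\gamma u_1-u_2\ge 0$ a supersolution of a locally uniformly elliptic operator, so the strong minimum principle gives $w\equiv 0$.

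However, as a proof the proposal has two genuine gaps, and they are exactly the hard steps. First, existence by direct minimization of $R$ at the critical power $n$: along a minimizing sequence normalized by $\|u_k\|_{L^\infty(\O)}=1$ you have no bound on the total Monge--Amp\`ere mass $Mu_k(\O)$, because the numerator only controls $\int_\O(-u_k)\,dMu_k$ and the weight $-u_k$ vanishes on $\bO$, so mass can concentrate near the boundary at no cost. Consequently the Aleksandrov-based modulus of continuity, the nondegeneracy of the limit ($u_*\not\equiv 0$), and the passage to the limit in the numerator are not justified as stated. Moreover, ``first variation'' in the cone of convex functions is only one-sided (generic two-sided perturbations leave the cone), so the Euler--Lagrange equation does not follow from stationarity by the integration-by-parts computation you sketch; this is precisely why Tso minimizes the subcritical functionals $J_{p,\sigma}$ with $p<n$ and passes $p\to n$ with uniform estimates, and why Le's general-domain result needed further work. (A workable alternative, in the spirit of this paper, is to compare a minimizer $u_*$ with the solution of $\det D^2v=\l_{MA}(-u_*)^n$, $v=0$ on $\bO$, and run the H\"older-equality argument of Lemma \ref{monotonicitylemma}; but some such device must be supplied.) Second, in the uniqueness argument the case where $\inf_\O(-u_2)/(-u_1)$ is attained only on $\bO$ is left unresolved: a Hopf-type lemma is not available off the shelf because the linearized operator degenerates at $\bO$ (the right-hand side $(-u)^n$ vanishes there and the eigenfunctions are only $C^{1,1}(\overline\O)$), and you flag this as the main obstacle without closing it. So the proposal is an outline of the Tso/Le approach with its two most delicate steps missing; note also that Lions' actual argument, which the paper summarizes, proceeds differently --- through $\l_{MA}=(\inf_A\l^1_A)^n$ and the blow-up of the family \eqref{LionsEquation} as $\tau\to\mu^-$ --- and thereby avoids the critical-exponent minimization entirely (your parenthetical Krein--Rutman remark is closer to that spirit, but the required strong positivity/compactness there is again nontrivial and unverified).
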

The constant $\l_{MA}$ is called the Monge-Amp\`ere eigenvalue and is defined in the following manner. Let $A(x) \in C(\O)$ be a symmetric, positive-definite matrix such that $\det A(x) \geq n^{-n}$ for all $x \in \O$. The collection of all such matrices will be denoted $\mathcal{A}$. Let $L_A$ be the linear operator $L_A v = -\tr(A(x) D^2 v)$, and denote by $\l^1_A$ the (positive) first Dirichlet eigenvalue of $L_A$. Then the Monge-Amp\`ere eigenvalue is defined as 
$$\l_{MA} := \left(\inf_{A \in \mathcal{A}} \lambda^1_A \right)^n.$$
The eigenvalue problem \eqref{eigenvalueproblemforMA} was revisited by Tso in \cite{Tso90} from a variational point-of-view. In order to state Tso's result, we need a few definitions. Consider the class of functions
$$
\K_2 = \lb  u \in C^{0,1}(\overline{\O}) \cap C^{\infty}(\O): \ u \text{ convex and non-zero in } \O, \ u = 0 \text{ on } \partial \O \rb.
$$
Define the Rayleigh quotient of a function $u \in \K_2$ as
$$
R(u) := \frac{\int_{\O} (-u) \ \det D^2u}{\int_{\O} (-u)^{n+1}}.
$$
It is useful to observe that $R(cu) = R(u)$ for all $c > 0$.
\begin{Theorem} [Tso '90]\label{Tso} Assume $\O \subset \R^n$ is a smooth, bounded, uniformly convex domain. Then 
$$\l_{MA} = \inf_{u \in \K_2} R(u).$$
\end{Theorem}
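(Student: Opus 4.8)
The assertion is equivalent to the two inequalities $\inf_{\K_2}R\le\l_{MA}$ and $R(u)\ge\l_{MA}$ for every $u\in\K_2$; since $R(cu)=R(u)$ for $c>0$, normalizations are free. The first inequality is immediate from Theorem~\ref{Lions}: the eigenfunction $u_\star$ there lies in $C^{1,1}(\overline{\O})\cap C^{\infty}(\O)\subset C^{0,1}(\overline{\O})\cap C^{\infty}(\O)$, is convex, non-zero in $\O$, and vanishes on $\bO$, so $u_\star\in\K_2$; substituting $\det D^2u_\star=\l_{MA}(-u_\star)^n$ into the definition of $R$ gives $R(u_\star)=\l_{MA}$. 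The substance of the theorem is thus the lower bound, which I would obtain by the direct method of the calculus of variations.

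To that end, enlarge the admissible class: let $\mathcal{K}$ be the family of all non-zero convex functions on $\O$ vanishing on $\bO$, on which $R$ is defined using the Aleksandrov Monge-Amp\`ere measure $\det D^2u$. Since $\K_2\subseteq\mathcal{K}$ one has $\inf_{\K_2}R\ge\inf_{\mathcal{K}}R$, so it is enough to show $\inf_{\mathcal{K}}R=\l_{MA}$ by producing a minimizer and identifying it. The steps: \emph{(i)} take a minimizing sequence $(u_k)$ and, using scale invariance, normalize so that $\Norm[L^{\infty}(\O)]{u_k}=1$; \emph{(ii)} convexity and the uniform sup-norm bound give uniform interior Lipschitz estimates, so by Blaschke selection a subsequence converges locally uniformly in $\O$ to a convex $u_\infty$ with $u_\infty=0$ on $\bO$ and $\Norm[L^{\infty}(\O)]{u_\infty}\le1$; the bound on $R(u_k)$ prevents concentration — a sequence with $\int_{\O}(-u_k)^{n+1}\to0$ would, by the Aleksandrov estimate at the minimum point, force $R(u_k)\to\infty$ — so $\int_{\O}(-u_k)^{n+1}$ stays bounded below and $u_\infty\not\equiv0$; \emph{(iii)} pass to the limit: $\int_{\O}(-u_k)^{n+1}\to\int_{\O}(-u_\infty)^{n+1}$ by dominated convergence, while the interior gradient bounds give local bounds on the Monge-Amp\`ere masses, so $\det D^2u_k\rightharpoonup\det D^2u_\infty$ weakly-$*$ in $\O$ and lower semicontinuity gives $\int_{\O}(-u_\infty)\det D^2u_\infty\le\liminf_k\int_{\O}(-u_k)\det D^2u_k$; hence $R(u_\infty)\le\inf_{\mathcal{K}}R$, i.e.\ $u_\infty$ is a minimizer; \emph{(iv)} the first variation of $R$ along perturbations that keep $u_\infty$ in the convex cone $\mathcal{K}$, computed with two integrations by parts and the divergence-free identity $\partial_i\bigl(\mathrm{cof}(D^2u)\bigr)_{ij}=0$, shows $u_\infty$ satisfies $\det D^2u_\infty=R(u_\infty)(-u_\infty)^n$ in $\O$ (in the Aleksandrov sense) with $u_\infty=0$ on $\bO$; \emph{(v)} by interior and boundary regularity for the Monge-Amp\`ere equation, $u_\infty$ is smooth and strictly convex in $\O$ and $C^{1,1}$ up to $\bO$, so $(R(u_\infty),u_\infty)$ is an eigenpair as in Theorem~\ref{Lions}, and uniqueness of the eigenvalue there gives $R(u_\infty)=\l_{MA}$. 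Thus $\inf_{\K_2}R\ge\inf_{\mathcal{K}}R=R(u_\infty)=\l_{MA}$, which together with the first inequality proves the claim.

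The main obstacles lie in (iv)--(v): passing from minimization over the convex cone $\mathcal{K}$ to the Euler-Lagrange relation as an honest equation rather than a one-sided variational inequality; and then invoking the (standard but non-trivial) regularity theory for Monge-Amp\`ere equations with $u$-dependent right-hand side, in order to place $u_\infty$ within the hypotheses of Theorem~\ref{Lions}. The compactness and lower-semicontinuity issues in (ii)--(iii) are comparatively routine, once one notes that the lower-semicontinuity inequality actually needed is insensitive to any escape of Monge-Amp\`ere mass toward $\bO$.

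One can also argue directly from the identity $\l_{MA}=\bigl(\inf_{A\in\mathcal{A}}\l^1_A\bigr)^n$. With $A_\star:=\tfrac1n(\det D^2u_\star)^{1/n}(D^2u_\star)^{-1}$ one checks that $A_\star\in\mathcal{A}$ (in fact $\det A_\star=n^{-n}$), that $-u_\star$ is the principal eigenfunction of $L_{A_\star}$ with eigenvalue $\l_{MA}^{1/n}=\inf_A\l^1_A$, and that the principal eigenfunction of the adjoint $L_{A_\star}^{*}$ is proportional to $(-u_\star)^n$. For $u\in\K_2$, the arithmetic-geometric mean inequality applied to $A_\star$ and $D^2u$ gives $L_{A_\star}(-u)=\tr(A_\star D^2u)\ge(\det D^2u)^{1/n}$ in $\O$; pairing this with $(-u_\star)^n$ and integrating by parts yields
\[
\l_{MA}^{1/n}\int_{\O}(-u)(-u_\star)^n\,dx\ \ge\ \int_{\O}(\det D^2u)^{1/n}(-u_\star)^n\,dx .
\]
Deducing $\int_{\O}(-u)\det D^2u\ge\l_{MA}\int_{\O}(-u)^{n+1}$ from this is the crux of this route, and appears to require Brunn-Minkowski / Alexandrov-Fenchel type inequalities for mixed Monge-Amp\`ere functionals, with the equality case $u=u_\star$ fixing the constant.
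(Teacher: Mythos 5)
You should first note that the paper contains no proof of Theorem \ref{Tso}: it is quoted from \cite{Tso90} (and is subsumed by Theorem \ref{Le}, also quoted), so there is no in-paper argument to compare with and your proposal has to stand on its own. Its easy half does: the Lions eigenfunction $u_\star$ belongs to $\K_2$ and $R(u_\star)=\l_{MA}$, giving $\inf_{\K_2}R\le\l_{MA}$. The genuine gap is that the lower bound, which is the entire content of the theorem, is never actually established. In your variational route, steps (i)--(iii) are essentially sound (indeed easier than you make them: with the normalization $\Norm[L^{\infty}(\O)]{u_k}=1$, convexity alone gives $u_k(y)\le -\dist(y,\bO)/\diam(\O)$ at every interior point, so $u_\infty\not\equiv 0$ without any appeal to the bound on $R(u_k)$, and the denominator is bounded below by the cone comparison as in Lemma \ref{normequivalence}). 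But step (iv) is precisely the hard part and you only assert it. A minimizer over the cone of convex functions does not admit two-sided variations: $u_\infty+t\phi$ need not be convex for arbitrary smooth $\phi$, so first-variation arguments only produce a one-sided variational inequality along convexity-preserving perturbations. Worse, at that stage $u_\infty$ is merely a convex function, so the computation you invoke (``two integrations by parts and the divergence-free identity for the cofactor matrix'') is not available; you would need the regularity claimed in step (v) before deriving the equation, while (v) presupposes that the equation already holds. This circularity is exactly why Tso does not minimize $R$ directly: he minimizes the subcritical functionals $J_{p,\sigma}$ of \eqref{TsoFunctional} with $p<n$, where minimizers exist and are shown to satisfy \eqref{TsoEquation} by comparison-principle and convexity arguments rather than naive first variation, and then passes to the limit $p\nearrow n$; Le's extension to general convex domains proceeds by approximation. (A smaller point: in (v) you do not need $C^{1,1}$ regularity up to $\bO$; once the Aleksandrov equation is known, Proposition \ref{interiorregularity} together with the uniqueness statement in Theorem \ref{Le} already forces $R(u_\infty)=\l_{MA}$.)

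Your alternative route via $\l_{MA}=\bigl(\inf_{A\in\mathcal{A}}\l^1_A\bigr)^n$ is set up correctly -- the matrix $A_\star$ does satisfy $\det A_\star=n^{-n}$, the inequality $\tr(A_\star D^2u)\ge(\det D^2u)^{1/n}$ is the right use of the arithmetic-geometric mean inequality, and one can check that $L^{*}_{A_\star}\bigl[(-u_\star)^n\bigr]=\l_{MA}^{1/n}(-u_\star)^n$ as you claim -- but, as you concede yourself, the passage from the displayed inequality to $\int_{\O}(-u)\det D^2u\ge\l_{MA}\int_{\O}(-u)^{n+1}$ is the crux and is left unproven; it is not a routine consequence and requires inequalities of mixed Monge-Amp\`ere type that you neither state nor prove. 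So both of your routes terminate exactly at the step that makes the theorem nontrivial: what you have is a plausible outline with the central inequality $R(u)\ge\l_{MA}$ still open, not a proof.
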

Owing to recent work of Le \cite{Le18}, Theorems \ref{Lions} and \ref{Tso} hold for arbitrary convex domains $\O$, without assuming uniform convexity. To state Le's result, we let
$$
\K = \lb  u \in C(\overline{\O}): \ u \text{ convex and non-zero in } \O, \ u = 0 \text{ on } \partial \O \rb.
$$
Given $u \in \K$, we denote by $Mu$ the Monge-Amp\`ere measure of $u$, defined in \eqref{MAmeasure} in Section \ref{sec:prelim}. The Monge-Amp\`ere energy of $u$ is the quantity $I(u) := \int_{\O} (-u) \ d Mu$. The Rayleigh quotient of $u$ is then defined as 
\begin{equation}\label{RayleighQuotient}
R(u) := \frac{I(u)}{||u||^{n+1}_{L^{n+1}(\O)}} = \frac{\int_{\O} (-u) \ d Mu}{\int_{\O} (-u)^{n+1}}.
\end{equation}
Note that this definition coincides with the one considered by Lions and Tso when $u \in \K_2$.
\begin{Theorem} [Le '18]\label{Le} Assume $\O \subset \R^n$ is a bounded, convex domain.  Then there exists a unique positive constant (still denoted by $\l_{MA}$) and a unique (up to positive multiplicative constants) function $u \in \K \cap C^{\infty}(\O)$ satisfying \eqref{eigenvalueproblemforMA} with
$$\l = \l_{MA} = \inf_{u \in \K} R(u).$$
\end{Theorem}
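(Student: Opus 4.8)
The plan is to run the direct method of the calculus of variations on the Rayleigh quotient $R$, identify the resulting minimizer as an eigenfunction by a Monge--Amp\`ere comparison argument, bootstrap its regularity, and establish uniqueness through the dual description of $\l_{MA}$ recorded above.

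\textbf{Existence of a minimizer.} Set $m := \inf_{u\in\K}R(u)$. Comparisons of a convex $u\in\K$ with the cone having the same (zero) boundary values and vertex at the minimum point of $u$, together with the Alexandrov maximum principle on sublevel sets, give $c(\O)\Norm[L^{\infty}(\O)]{u}^{n+1}\le I(u)$ and $c(\O)\Norm[L^{\infty}(\O)]{u}^{n+1}\le\int_{\O}(-u)^{n+1}\le\Norm[L^{\infty}(\O)]{u}^{n+1}\norm{\O}$; the first two inequalities force $m>0$. Take a minimizing sequence $(u_k)\subset\K$ normalized by $\Norm[L^{n+1}(\O)]{u_k}=1$, so that $I(u_k)=R(u_k)\to m$ and $\Norm[L^{\infty}(\O)]{u_k}\le C_0$; then $(u_k)$ subconverges locally uniformly on $\O$ to a convex $u$ with $\Norm[L^{\infty}(\O)]{u}\le C_0$. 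Writing $\O_{\d}=\{x:\dist(x,\bO)>\d\}$, the bound $\int_{\O\setminus\O_{\d}}(-u_k)^{n+1}\le C_0^{n+1}\norm{\O\setminus\O_{\d}}$ is small uniformly in $k$, so dominated convergence on $\O_{\d}$ gives $\int_{\O}(-u)^{n+1}=1$; in particular $u\not\equiv0$. Since $Mu_k\rightharpoonup Mu$ weakly-$*$ on $\O$ with $Mu_k(U)$ uniformly bounded for each $U\Subset\O$, and $-u_k\to-u$ uniformly on compacts, one gets $\int_U(-u)\,dMu=\lim_k\int_U(-u_k)\,dMu_k\le\liminf_k I(u_k)=m$ for suitable $U\Subset\O$, hence $I(u)\le m$ on letting $U\uparrow\O$. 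Therefore $R(u)=I(u)=m$. The delicate point — exactly where the uniform convexity hypothesis of \cite{Lions85,Tso90} is removed — is to confirm that the interior limit $u$ extends to $\overline{\O}$ with $u=0$ on $\bO$, i.e. $u\in\K$; profiles that are nearly flat inside and steep near $\bO$ are ruled out by quantitative Alexandrov estimates, since they would force $I(u_k)\to\infty$.

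\textbf{The minimizer is a smooth eigenfunction.} Normalize $I(u)=m$, and let $\tilde u$ be the unique convex Alexandrov solution of $\det D^2\tilde u=m(-u)^n$ in $\O$ with $\tilde u=0$ on $\bO$. The engine is the mixed Monge--Amp\`ere (H\"older-type) inequality: for convex $\phi,\psi$ vanishing on $\bO$,
\[
\int_{\O}(-\psi)\,dM\phi\le I(\phi)^{\frac{n}{n+1}}\,I(\psi)^{\frac{1}{n+1}},
\]
with equality iff $\psi=c\phi$ for some $c>0$. Applying it with $(\phi,\psi)=(\tilde u,u)$ and using $\int_{\O}(-u)\,dM\tilde u=m\int_{\O}(-u)^{n+1}=m=I(u)$ gives $I(u)\le I(\tilde u)$; conversely, H\"older gives $I(\tilde u)=m\int_{\O}(-\tilde u)(-u)^n\le m\Norm[L^{n+1}(\O)]{\tilde u}$, while minimality of $u$ (so $R(\tilde u)\ge m$) gives $I(\tilde u)\ge m\Norm[L^{n+1}(\O)]{\tilde u}^{n+1}$, whence $\Norm[L^{n+1}(\O)]{\tilde u}\le1$ and $I(\tilde u)\le m=I(u)$. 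So equality holds in the mixed inequality, $u=c\tilde u$, and since the Rayleigh quotient of an eigenfunction equals its eigenvalue one gets $c=1$, i.e. $\det D^2u=m(-u)^n$ in $\O$ in the Alexandrov sense. As $u$ is continuous, convex and strictly negative in $\O$, the right-hand side is continuous and locally bounded away from $0$ and $\infty$, so Caffarelli's regularity theory for the Monge--Amp\`ere equation gives interior $C^{1,\a}$ regularity and strict convexity, then $C^{2,\a}$, and a Schauder bootstrap yields $u\in C^{\infty}(\O)$. Thus $u\in\K\cap C^{\infty}(\O)$ solves \eqref{eigenvalueproblemforMA} with $\l=m=\inf_{\K}R$.

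\textbf{Uniqueness.} Let $v\in\K\cap C^{\infty}(\O)$ solve \eqref{eigenvalueproblemforMA} with constant $\l$; then $R(v)=\l$, so $\l\ge m$. For the reverse inequality I would use the characterization $\l^1_A=\sup_{w>0}\inf_{\O}\tfrac{L_A w}{w}$ of the principal Dirichlet eigenvalue: for every $A\in\mathcal{A}$, the AM--GM inequality gives $L_A(-v)=\tr(A\,D^2v)\ge n(\det A)^{1/n}(\det D^2v)^{1/n}\ge(\det D^2v)^{1/n}=\l^{1/n}(-v)$, so testing with $w=-v$ (after the needed comparison of its boundary behavior with that of the principal eigenfunction of $L_A$) yields $\l^1_A\ge\l^{1/n}$, hence $\inf_{A\in\mathcal{A}}\l^1_A\ge\l^{1/n}$. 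On the other hand $A_u:=\tfrac1n(\det D^2u)^{1/n}(D^2u)^{-1}$, built from the eigenfunction $u$ above, lies in $\mathcal{A}$ and satisfies $L_{A_u}(-u)=m^{1/n}(-u)$, so by positivity $\l^1_{A_u}=m^{1/n}$; therefore $m^{1/n}=\inf_{A\in\mathcal{A}}\l^1_A=\l^{1/n}$, giving $\l=m$, and $\l_{MA}:=(\inf_{A\in\mathcal{A}}\l^1_A)^n=m$, matching Lions' definition. Uniqueness of the eigenfunction up to positive multiples then follows from a sliding/comparison argument: rescaling so that $\theta v\ge u$ in $\O$ with interior contact at some $x_0$, the relation $D^2u(x_0)\le\theta D^2v(x_0)$ together with the determinant constraint forces equality there, and $w:=u-\theta v\le0$ attains an interior maximum $0$ while solving a linear elliptic equation $(L+c)w=0$ with $c\ge0$ and $cw\le0$, so the strong maximum principle gives $w\equiv0$.

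\textbf{Main obstacle.} I expect the two genuine difficulties to be: in the existence step, controlling the near-boundary behavior of a minimizing sequence on a merely convex (not uniformly convex) domain, so that its interior limit is truly an element of $\K$ — this is precisely Le's improvement over \cite{Lions85,Tso90}; and, in the uniqueness step, the comparison arguments near $\bO$ (both testing $-v$ against the principal eigenfunction of $L_A$ and sliding $\theta v$ against $u$), which rest on knowing that eigenfunctions vanish at a comparable rate along $\bO$, the point most sensitive to the geometry of $\O$.
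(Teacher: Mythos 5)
A point of order first: the paper does not prove Theorem \ref{Le} at all --- it is quoted from Le \cite{Le18} (with the smooth, uniformly convex case due to Lions \cite{Lions85} and Tso \cite{Tso90}) and is used as an input to Theorem \ref{MainTheorem} --- so there is no in-paper proof to compare yours against; your proposal can only be judged on its own terms.

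Judged that way, your outline follows the natural Lions--Tso variational route, but two of its load-bearing steps are asserted rather than proved, and they are exactly where the content of the theorem lies. First, the ``mixed Monge--Amp\`ere H\"older inequality'' $\int_\O(-\psi)\,dM\phi\le I(\phi)^{\frac{n}{n+1}}I(\psi)^{\frac{1}{n+1}}$, with equality only when $\psi=c\phi$, is the engine of your identification of the minimizer as an eigenfunction, yet you give no proof of it and it is not an off-the-shelf fact: for $n=1$ it reduces to Cauchy--Schwarz for $\int u'v'$, but for $n\ge2$ it encodes a Minkowski-type convexity property of $I^{1/(n+1)}$ on the cone of convex functions vanishing on $\bO$, and its equality case (which is what you actually use) is of essentially the same depth as the uniqueness statement being proved; without this lemma the step $u=c\tilde u$, hence $\det D^2u=m(-u)^n$, collapses. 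Second, in the uniqueness paragraph you invoke, for arbitrary $A\in\mathcal{A}$ and for $A_u=\frac1n(\det D^2u)^{1/n}(D^2u)^{-1}$, the existence and characterization $\l^1_A=\sup_{w>0}\inf_\O(L_Aw/w)$ of a principal Dirichlet eigenvalue and the conclusion $\l^1_{A_u}=m^{1/n}$ ``by positivity''. But elements of $\mathcal{A}$ are only continuous and positive definite in the interior, and $A_u$ genuinely degenerates/blows up at $\bO$ (where $\det D^2u=\l(-u)^n\to0$ and $u$ is only smooth inside), so none of this is standard spectral theory; making these comparisons work near $\bO$ on a merely convex domain is precisely the hard part of Le's result, and your sketch leaves it --- and the boundary comparability needed for the final sliding argument --- open, as you yourself acknowledge. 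By contrast, the existence-step issue you flag is the easiest to close: applying Theorem \ref{alexandrov} to $u_k+t$ on the sublevel set $\{u_k<-t\}$ with $t=\tfrac12(-u_k(x))$ gives $(-u_k(x))^{n+1}\le C(n,\O)\,\dist(x,\bO)\,I(u_k)$, and $I(u_k)$ is bounded along your normalized minimizing sequence, which yields a uniform boundary modulus and hence $u_\infty\in\K$. So the proposal is a reasonable road map, consistent in spirit with the known proofs, but as written it is not a proof.
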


There are two methods currently available for constructing a solution of \eqref{eigenvalueproblemforMA}, both relying on compactness arguments. The first, by Lions \cite{Lions85}, considers solving the following Dirichlet problem for a convex function $u_{\tau} \in C^2(\overline{\O})$ for each $\tau \geq 0$:
\begin{equation}\label{LionsEquation}
\begin{cases}
\det D^2u_{\tau} = (1 - \tau u_{\tau})^n & \quad \text{in } \O,\\
u_{\tau} = 0 & \quad \text{on } \partial \O.
\end{cases}
\end{equation}
It is shown in \cite[Theorem 1]{Lions85} that the quantity
\begin{equation}\label{definitionofmu}
\mu := \sup\{ \tau > 0 : \text{ there exists a solution } u_{\tau} \text{ of  \eqref{LionsEquation}} \}
\end{equation}
is strictly positive, that $\lim_{\tau \to \mu^{-}} ||u_{\tau}||_{L^{\infty}(\O)} = \infty$, and that (up to choice of a subsequence) the functions $\hat{u}_{\tau} := \frac{u_{\tau}}{||u_{\tau}||_{L^{\infty}(\O)}}$ converge to a solution of \eqref{eigenvalueproblemforMA} as $\tau \to \mu^{-}$. Furthermore, $\mu = \l_{MA}^{\frac{1}{n}}$; thus, \eqref{definitionofmu} provides a third characterization of the Monge-Amp\`ere eigenvalue $\l_{MA}$. 

The second method of constructing a solution of \eqref{eigenvalueproblemforMA}, by Tso \cite{Tso90}, is to fix constants $\sigma, p > 0$ and consider the Dirichlet problem
\begin{equation}\label{TsoEquation}
\begin{cases}
\det D^2u = \sigma (-u)^p & \quad \text{in } \O,\\
u = 0 & \quad \text{on } \partial \O.
\end{cases}
\end{equation}
Notice that the equation \eqref{TsoEquation} is the Euler-Lagrange equation of the functional
\begin{equation}\label{TsoFunctional}
J_{p,\sigma}(u) := \frac{1}{n+1}\int_{\O} (-u) \ \det D^2u - \frac{\sigma}{p+1} \int_{\O} (-u)^{p+1}.
\end{equation}
Using variational methods, Tso proves the existence of unique minimizers in $\mathcal{K}_2$ of the functional $J_{p,\sigma}$ for $p < n$ and $\sigma = \l_{MA}$. By establishing estimates for the minimizers that are uniform in $p$, Tso shows there exists a sequence $p_k \nearrow n$ such that the solutions $u_k$ of \eqref{TsoEquation} with $p = p_k$ and $\sigma = \l_{MA}$ converge to a solution of \eqref{eigenvalueproblemforMA}. 

The primary contribution of the present work is to present an iterative method for constructing a sequence of functions $u_k \in \K$ that converges uniformly to a solution of \eqref{eigenvalueproblemforMA}. This sequence is obtained by repeatedly inverting the Monge-Amp\`ere operator with Dirichlet boundary condition. We show, moreover, that $\lim\limits_{k \to \infty} R(u_k) = \l_{MA}$. Similar inverse iteration methods have been considered for equations in divergence form such as the $p$-Laplace equation \cite{BiezunerErcoleMartins09, HyndLindgren16, Bozorgnia16}. The present work establishes the first inverse iteration result for the eigenvalue problem of a fully nonlinear degenerate elliptic equation.

\begin{Theorem}\label{MainTheorem}
Suppose $\O \subset \Rn$ is a bounded, convex domain. Let $u_0 \in C(\overline{\O})$ satisfy the following conditions:
\begin{itemize}
\item[(i)] $u_0$ is convex and $u_0 \leq 0$ on $\partial \O$,
\item[(ii)] $R(u_0) < \infty$,
\item[(iii)] $Mu_0 \geq \mathcal{L}^n$ in $\O$, where $\mathcal{L}^n$ denotes $n$-dimensional Lebesgue measure.
\end{itemize}
For $k \geq 0$, define the sequence $u_k \in \K$ to be the solutions of the Dirichlet problem
\begin{equation}\label{iteration}
\begin{cases}
\det D^2u_{k+1} = R(u_k) (-u_k)^n & \quad \text{in } \O, \\
u_{k+1} = 0 & \quad \text{on } \partial \O.
\end{cases}
\end{equation}
Then $\{u_k\}$ converges uniformly on $\overline{\O}$ to a non-zero Monge-Amp\`ere eigenfunction $u_{\infty}$. Consequently, the sequence $\hat{u}_k := \frac{u_k}{||u_k||_{L^{\infty}(\O)}}$ converges uniformly on $\overline{\O}$ to the unique solution $u$ of \eqref{eigenvalueproblemforMA} satisfying $||u||_{L^{\infty}(\O)} = 1$. Furthermore, $\lim\limits_{k \to \infty} R(u_k) = \lim\limits_{k \to \infty} R(\hat{u}_k)  = \l_{MA}$.
\end{Theorem}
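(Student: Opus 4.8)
The plan is a compactness argument: establish uniform two‑sided bounds on the iterates, extract a convergent subsequence, identify its limit as a Monge--Amp\`ere eigenfunction, and then promote this to convergence of the whole sequence. Hypothesis (ii) is used only to launch the iteration, while (iii) is what keeps the iterates from degenerating to zero. \textbf{Step 1 (well-posedness and a monotone energy).} First I would check by induction that each $u_k$ is well defined and lies in $\K$: since $u_0$ is bounded, the right-hand side of \eqref{iteration} at level $0$ is a bounded nonnegative function, so $u_1\in\K$ exists by Aleksandrov's solvability theory on bounded convex domains; once $u_k\in\K$, it is strictly negative in $\O$, so $0<\|u_k\|_{L^{n+1}(\O)}<\infty$, $0<I(u_k)<\infty$ and $R(u_k)<\infty$, and the induction continues. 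By Theorem \ref{Le}, $R(u_k)\geq\l_{MA}$ for every $k$. The crucial monotonicity is $I(u_{k+1})\leq I(u_k)$: integrating \eqref{iteration} against $-u_k$ gives $\int_\O(-u_k)\,dMu_{k+1}=R(u_k)\int_\O(-u_k)^{n+1}=I(u_k)$, while the Brunn--Minkowski inequality for the Monge--Amp\`ere operator --- concavity of $v\mapsto I(v)^{1/(n+1)}$ on the cone of convex functions vanishing on $\bO$, whose linearization at $v$ in direction $\eta$ is $(n+1)\int_\O(-\eta)\,dMv$ --- gives $\int_\O(-u_k)\,dMu_{k+1}\geq I(u_k)^{1/(n+1)}I(u_{k+1})^{n/(n+1)}$; combining, $I(u_{k+1})\leq I(u_k)$, so $I(u_k)\downarrow I_\infty\geq0$.

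\textbf{Step 2 (non-degeneracy and uniform bounds).} Here (iii) enters. Let $\Phi\in\K$ solve $\det D^2\Phi=\mathcal L^n$ in $\O$ with $\Phi=0$ on $\bO$. Since $Mu_0\geq\mathcal L^n=M\Phi$ and $u_0\leq0=\Phi$ on $\bO$, the comparison principle gives $u_0\leq\Phi$ in $\O$; and since the unit-height eigenfunction $u$ satisfies $Mu=\l_{MA}(-u)^n\mathcal L^n\leq\l_{MA}\mathcal L^n=M(\l_{MA}^{1/n}\Phi)$, comparison gives $\l_{MA}^{1/n}\Phi\leq u$. Hence $u_0\leq\Phi\leq\beta_0 u$ in $\O$ with $\beta_0:=\l_{MA}^{-1/n}>0$, and this propagates: if $u_k\leq\beta_0 u$, then $Mu_{k+1}=R(u_k)(-u_k)^n\mathcal L^n\geq\l_{MA}\beta_0^n(-u)^n\mathcal L^n=M(\beta_0 u)$ (using $R(u_k)\geq\l_{MA}$), so by comparison $u_{k+1}\leq\beta_0 u$ in $\O$. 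Therefore $\|u_k\|_{L^\infty(\O)}\geq\beta_0$ and $\|u_k\|_{L^{n+1}(\O)}\geq\beta_0\|u\|_{L^{n+1}(\O)}>0$ for all $k$. Feeding these lower bounds, together with $I(u_k)\leq I(u_0)$, into Aleksandrov's maximum principle $\|u_{k+1}\|_{L^\infty}^n\leq C(n,\O)Mu_{k+1}(\O)=C(n,\O)R(u_k)\int_\O(-u_k)^n$ and applying H\"older to the last integral, I obtain a uniform bound $\|u_k\|_{L^\infty(\O)}\leq M$; then $Mu_k\leq\Lambda\,\mathcal L^n$ for a uniform $\Lambda$, so comparison gives $|u_k|\leq\Lambda^{1/n}|\Phi|$ in $\O$ for all $k$, a modulus of continuity up to $\bO$ uniform in $k$. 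With the interior Lipschitz bounds supplied by convexity and $\|u_k\|_{L^\infty}\leq M$, the family $\{u_k\}$ is precompact in $C(\overline\O)$.

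\textbf{Step 3 (limit, identification, convergence of the whole sequence).} Along a subsequence, $u_{k_j}\to u_\infty$ and $u_{k_j+1}\to\tilde u$ uniformly on $\overline\O$, with $u_\infty,\tilde u\in\K$ and $-u_\infty\geq\beta_0(-u)$, so $u_\infty\neq0$. Since $\|u_{k_j}\|_{L^{n+1}}\to\|u_\infty\|_{L^{n+1}}>0$ and $I(u_{k_j})\to I(u_\infty)=I_\infty$ (continuity of $I$ under uniform convergence, the boundary contributing nothing because $|u_k|\leq\Lambda^{1/n}|\Phi|\to0$ there), we have $R(u_{k_j})\to R_\infty:=R(u_\infty)$; by weak-$*$ continuity of Monge--Amp\`ere measures and uniform convergence of the right-hand side, passing to the limit in \eqref{iteration} gives $M\tilde u=R_\infty(-u_\infty)^n\mathcal L^n$. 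Then $\int_\O(-u_\infty)\,dM\tilde u=R_\infty\int_\O(-u_\infty)^{n+1}=I(u_\infty)=I_\infty=I(\tilde u)$, so the Brunn--Minkowski inequality of Step 1 holds with equality for $(u_\infty,\tilde u)$; its equality case forces $\tilde u=cu_\infty$ for some $c>0$, and equality of energies (with $I_\infty>0$) forces $c=1$. Thus $Mu_\infty=R_\infty(-u_\infty)^n\mathcal L^n$, i.e. $u_\infty$ solves \eqref{eigenvalueproblemforMA} with $\l=R_\infty$; by the uniqueness in Theorem \ref{Le}, $R_\infty=\l_{MA}$ and $u_\infty=h_\infty u$ for some $h_\infty\in[\beta_0,M]$. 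Every subsequential limit has this form, and the monotone convergence $I(u_k)\downarrow I_\infty>0$ together with $I(h u)=h^{n+1}I(u)$ pins down $h_\infty=(I_\infty/I(u))^{1/(n+1)}$ uniquely, so $u_k\to u_\infty:=h_\infty u$ uniformly on $\overline\O$. Finally $\hat u_k=u_k/\|u_k\|_{L^\infty(\O)}\to u$ uniformly and $R(u_k)=R(\hat u_k)\to R(u_\infty)=\l_{MA}$ by continuity of $R$.

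The main obstacle is Step 2: a priori the iterates could shrink to zero --- indeed the analogous iteration with the constant $\l_{MA}$ in place of $R(u_k)$ is non-expansive in $L^{n+1}$ and may converge to zero --- and it is exactly the combination of (iii), exploited through the sandwich $u_0\leq\Phi\leq\beta_0 u$, with the inequality $R(u_k)\geq\l_{MA}$ (which makes the iteration ``pump up'' until the eigenvalue is reached) that prevents degeneration. A secondary technical point is the equality case of the Brunn--Minkowski inequality for $\det D^2$ used in Step 3, which could instead be replaced by a strict-monotonicity argument for $R$ along the iteration away from eigenfunctions, together with the routine but slightly delicate continuity of $I$ and of Monge--Amp\`ere measures up to $\bO$.
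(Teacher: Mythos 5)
Your architecture matches the paper's almost step for step: non-degeneracy by inductively comparing $u_k$ with the eigenfunction normalized to height $\l_{MA}^{-1/n}$ (your $\beta_0 u$ is exactly the paper's comparison function), uniform bounds from the Aleksandrov maximum principle, compactness, passage to the limit in \eqref{iteration} via continuity of the energy $I$ and stability of Aleksandrov solutions, identification of the shifted limit with $u_\infty$ through an equality case, and then full-sequence convergence because the limit value of a monotone quantity pins down the multiple of the eigenfunction. (Your last step is in fact slightly cleaner than the paper's interleaving-of-subsequences argument.) The genuine divergence is the monotone quantity and the rigidity tool behind it: you test \eqref{iteration} against $-u_k$ and then invoke the Minkowski/Brunn--Minkowski-type inequality
$$\int_{\O}(-u)\,dMv \;\geq\; I(u)^{\frac{1}{n+1}}\,I(v)^{\frac{n}{n+1}},$$
together with its equality case ($u=cv$), to get $I(u_{k+1})\leq I(u_k)$ and, at the limit, $\tilde u = u_\infty$. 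The paper instead tests the equation for $u_{k+1}$ against $-u_{k+1}$, so that plain H\"older gives monotonicity of $R(u_k)\,\Norm[L^{n+1}(\O)]{u_k}^{n}$, and at the limit the \emph{equality case of H\"older} (rather than of Brunn--Minkowski) forces $(-w_\infty)^{n+1}=c(-u_\infty)^{n+1}$. That elementary substitution is exactly what lets the paper stay self-contained.

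The one real gap in your write-up is that the concavity of $I^{1/(n+1)}$, the resulting inequality above, and especially its equality-case rigidity are asserted, not proved, and they are applied to functions that are a priori only in $\K$ (no smoothness or strict convexity of $u_\infty$ is known at that stage). Such inequalities do exist in the literature (they are essentially how uniqueness of the eigenfunction is obtained), but at the Aleksandrov-solution level on a general bounded convex domain, and with the proportionality characterization of equality, this is a substantially heavier import than the problem requires, and your proof is incomplete without a precise citation or proof of it at that generality. You anticipate this yourself and suggest a fallback; the simplest fix is the paper's: replace your Step 1 and the equality-case step by the $-u_{k+1}$ test function plus H\"older, after which everything else in your argument goes through essentially verbatim.
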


We briefly outline the strategy behind the proof of Theorem \ref{MainTheorem}. The starting point is a monotonicity relation, proved in Lemma \ref{monotonicitylemma}, which provides control over the Rayleigh quotients $R(u_k)$ and enables us to prove uniform H\"older estimates for the functions $u_k$; see Lemma \ref{HolderEstimate}. The sequence $\{u_k\}$ is, therefore, compact; hence, there exists a subsequence $\{u_{k(j)}\}_{j \in \N}$ converging to a limiting function $u_{\infty}$. Comparison principle arguments using the eigenfunctions from Theorem \ref{Le} show that   $||u_k||_{L^{\infty}(\O)}$ stays uniformly away from zero; see Lemma \ref{nondegeneracy}. Consequently, $u_{\infty} \in \K$ is a candidate to solve the eigenvalue problem \eqref{eigenvalueproblemforMA}. However, in order to prove that $u_{\infty}$ is an eigenfunction, it is necessary to show that the shifted subsequence $\{u_{k(j)+1} \}_{j \in \N}$ also converges to $u_{\infty}$.
The monotonicity relation and a continuity property of the Monge-Amp\`ere energy, Lemma \ref{continuityofenergy}, are essential to verify the aforementioned claim, as well as to establish that any convergent subsequence of $\{u_k\}$ must converge to the same eigenfunction $u_{\infty}$.

Let us point out an elementary construction of an initial function $u_0$ satisfying the hypotheses of Theorem \ref{MainTheorem} for any bounded, convex domain $\O \subset \Rn$. Let $B_R(x_0)$ be any ball centered at $x_0 \in \Rn$ of radius $R > 0$ such that $\O \Subset B_R(x_0)$. Consider the parabola $P_R(x) = \frac{1}{2}\left(|x-x_0|^2 - R^2\right)$, which satisfies $\det D^2 P_R(x) = 1$ for all $x \in \Rn$ and vanishes on $\partial B_R(x_0)$. Then $u_0(x) = P_R(x)$ satisfies all the properties required in the statement of Theorem \ref{MainTheorem}.

We highlight some other noteworthy attributes of the iteration \eqref{iteration}. First, let us point out that both the approaches of Lions and Tso outlined above for constructing a solution of \eqref{eigenvalueproblemforMA} require \emph{a priori} knowledge of the Monge-Amp\`ere eigenvalue $\l_{MA}$. The iterative method \eqref{iteration} solves for both the eigenfunction and eigenvalue simultaneously and thus requires no advance knowledge of $\l_{MA}$. Additionally, \eqref{iteration} provides a means to estimate $\l_{MA}$ by computing the Rayleigh quotients $R(u_k)$ for $k$ large. Approximation of the Monge-Amp\`ere eigenvalue is of interest, as $\l_{MA}$ is known to satisfy analogues of the classical Brunn-Minkowski, isoperimetric, and reverse isoperimetric inequalities; we refer to the works \cite{Salani05, BrandoliniNitschTrombetti09, Hartenstine09, Le18} for the exact statements of these inequalities. It has also been noted in \cite{OlikerGaussCurvatureFlow, LiWangGaussCurvatureFlow} that $\l_{MA}$ should determine the rate of extinction for a class of non-parametric surfaces flowing by the $n$-th root of their Gauss curvature.

Second, the methods of Lions and Tso necessitate solving Dirichlet problems for Monge-Amp\`ere equations of the form $\det D^2 u = f(u)$, where the right-hand side is some function $f$ of the unknown $u$. The iteration \eqref{iteration}, on the other hand, requires solving Dirichlet problems for Monge-Amp\`ere equations of the form $\det D^2 u = g$ where the right-hand side $g$ depends only on the previous iterate, hence is a known function. This makes \eqref{iteration} appealing from the point-of-view of numerical analysis. There is a vast literature on numerical methods for the Dirichlet problem for the Monge-Amp\`ere equation and, more generally, fully nonlinear elliptic equations. We refer the reader to the recent survey \cite{NeilanSalgadoZhangSurvey} for an extensive overview.

Finally, let us recall that the Monge-Amp\`ere operator can also be written in divergence form:
$$\det D^2u = \frac{1}{n} \text{div} (\Phi_u \nabla u),$$
where $\Phi_u (x)$ is the cofactor matrix of $D^2u(x)$, given by $\det D^2u(x)  (D^2u(x))^{-1}$ when $D^2u(x)$ is invertible. An integration by parts shows that one can write the Rayleigh quotient \eqref{RayleighQuotient} in the more familiar manner
$$R(u) = \frac{\frac{1}{n} \int_{\O} \la \Phi_u \nabla u, \nabla u \ra}{\int_{\O} (-u)^{n+1}}.$$
This form of the Rayleigh quotient suggests using appropriate versions of Poincar\'e and Sobolev-type inequalities (see \cite{TianWang08, Maldonado13}) to prove Theorem \ref{MainTheorem}. However, this would require explicit control of the cofactor matrix $\Phi_u$ at each step of the iteration, which is difficult as the smallest eigenvalue of $D^2u$ degenerates near $\partial \Omega$, due to imposing the Dirichlet boundary condition. Our proof of Theorem \ref{MainTheorem} thus relies heavily on techniques for tackling non-divergence form equations and makes full use of various fundamental attributes of convex functions and solutions of the Monge-Amp\`ere equation.

Let us mention that Theorem \ref{MainTheorem} does not provide an independent proof of existence and uniqueness (up to scaling of the eigenfunction) of an eigenpair $(u,\l)$ solving \eqref{eigenvalueproblemforMA}; it merely provides a computational method for obtaining the eigenfunction $u$ of unit height and the eigenvalue $\l_{MA}$. In fact, the proof of Theorem \ref{MainTheorem} uses Theorem \ref{Le}.

The rest of this note is structured as follows: in Section \ref{sec:prelim} we state some basic properties of convex functions and the Monge-Amp\`ere equation. The proof of the main result, Theorem \ref{MainTheorem}, is carried out in Section \ref{sec:mainthm}. 


\section{Background on the Monge-Amp\`ere Equation}\label{sec:prelim}

This section is devoted to stating some basic results on convex functions and weak solutions of the Monge-Amp\`ere equation that will be used in the proof of Theorem \ref{MainTheorem}. From here onward, we will assume that the domain $\O$ is bounded and convex.

Given a function $u \in C(\overline{\O})$, the subdifferential of $u$ at $x \in \O$ is the set
$$\partial u(x) := \{ p \in \Rn : u(y) \geq u(x) + p \cdot (y - x) \text{ for all } y \in \O \}.$$
If $u$ is differentiable at $x$, then $\partial u(x) = \{\nabla u(x)\}$. Given a set $E \subset \O$, we define
$$\partial u(E) := \bigcup_{x \in \O} \partial u(x).$$
The Monge-Amp\`ere measure of $u$ is defined as
\begin{equation}\label{MAmeasure}
Mu(E) := \mathcal{L}^n(\partial u(E)) \quad \text{for all } E \subset \O \text{ such that } \partial u(E) \text{ is Lebesgue measurable,}
\end{equation}
where, $\mathcal{L}^n$ denotes $n$-dimensional Lebesgue measure. It is well known that $Mu$ is a Radon measure (see \cite[Lemma 1.2.2]{GutierrezBook}) and that if $u \in C^2(\O)$,
$$Mu(E) = \int_E \det D^2 u.$$
The following result shows that Monge-Amp\`ere measures are stable under uniform convergence.
\begin{Lemma} [{Weak Convergence of Monge-Amp\`ere Measures; \cite[Lemma 1.2.3]{GutierrezBook} and \cite[Proposition 2.6]{FigalliBook}}]\label{weakconvergence} If $u_k$ are convex functions in $\O$ converging locally uniformly to a function $u$, then the associated Monge-Amp\`ere measures $Mu_k$ converge weakly to the measure $Mu$; that is,
$$\lim\limits_{k \to \infty} \int_{\O} \varphi \ d Mu_k  = \int_{\O} \varphi \ d Mu \quad \text{for all } \varphi \in C_c(\O).$$
\end{Lemma}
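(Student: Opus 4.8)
The plan is to follow the classical argument for stability of Monge-Amp\`ere measures: reduce the weak convergence to an upper bound on compact sets and a lower bound on open sets, and then combine these through the layer-cake formula. The only preliminary needed is a uniform mass bound: for every compact $K \Subset \O$ one has $\sup_k Mu_k(K) < \infty$. Indeed, choosing $r > 0$ so that $K_r := \{x : \dist(x,K) \le r\} \Subset \O$, local uniform convergence gives $M := \sup_k \sup_{K_r} |u_k| < \infty$, and for $x \in K$ and nonzero $p \in \partial u_k(x)$ the subgradient inequality evaluated at $x + rp/|p| \in K_r$ yields $|p| \le 2M/r$; hence every set $\partial u_k(K)$ lies in one fixed ball $B$, so $Mu_k(K) = \mathcal{L}^n(\partial u_k(K)) \le \mathcal{L}^n(B)$.

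For the upper bound, fix a compact $K \Subset \O$; I would first show $\limsup_k \partial u_k(K) \subseteq \partial u(K)$, where $\limsup_k A_k := \bigcap_j \bigcup_{k \ge j} A_k$. If $p$ lies in $\partial u_k(x_k)$ for infinitely many $k$ with $x_k \in K$, then after passing to a subsequence with $x_k \to x \in K$, letting $k \to \infty$ in $u_k(y) \ge u_k(x_k) + p \cdot (y - x_k)$ (using local uniform convergence) gives $p \in \partial u(x) \subseteq \partial u(K)$. Since all $\partial u_k(K) \subseteq B$, reverse Fatou applied to their indicator functions yields $\limsup_k Mu_k(K) \le \mathcal{L}^n(\limsup_k \partial u_k(K)) \le Mu(K)$.

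For the lower bound, fix an open set $U$ with $\overline U \subset \O$; I claim $\liminf_k Mu_k(U) \ge Mu(U)$. Here I would invoke the standard fact that the set of slopes $p$ supporting $u$ at two or more distinct points is $\mathcal{L}^n$-null, so it is enough to show that every $p \in \partial u(x)$, $x \in U$, for which the affine function $\ell(y) := u(x) + p \cdot (y-x)$ touches $u$ only at $x$, belongs to $\partial u_k(U)$ for all large $k$. For such $p$ we have $u - \ell \ge \delta > 0$ on the compact set $\partial U$ for some $\delta > 0$, while $(u_k - \ell)(x) = u_k(x) - u(x) \to 0$; since $u_k \to u$ uniformly on $\overline U$, for $k$ large $\min_{\overline U}(u_k - \ell) < \delta/2 \le \min_{\partial U}(u_k - \ell)$, so this minimum is attained at some interior point $x_k \in U$. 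Then $u_k(y) \ge u_k(x_k) + p \cdot (y - x_k)$ holds for $y$ in a ball around $x_k$, and convexity of $u_k$ propagates it to all of $\O$, so $p \in \partial u_k(x_k) \subseteq \partial u_k(U)$. Ordinary Fatou then gives $Mu(U) = \mathcal{L}^n(\partial u(U)) \le \mathcal{L}^n(\liminf_k \partial u_k(U)) \le \liminf_k Mu_k(U)$. I expect this step to be the main obstacle: it is the only place requiring both the negligibility of multiply-touched slopes and the sliding-supporting-plane construction, and some care is needed to keep the touching point of the shifted plane in the interior of $U$ — precisely what the strict inequality $u - \ell \ge \delta > 0$ on $\partial U$ secures. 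Steps elsewhere are comparatively soft once the uniform slope bound is in hand.

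To conclude, writing $\varphi = \varphi^+ - \varphi^-$ reduces matters to $0 \le \varphi \in C_c(\O)$, for which $\{\varphi > t\} \Subset \O$ is open and $\{\varphi \ge t\}$ is compact for each $t > 0$. The layer-cake formula $\int_\O \varphi \, dMu_k = \int_0^\infty Mu_k(\{\varphi > t\}) \, dt$ together with the open-set bound and Fatou gives $\liminf_k \int_\O \varphi \, dMu_k \ge \int_\O \varphi \, dMu$, while $\int_\O \varphi \, dMu_k = \int_0^\infty Mu_k(\{\varphi \ge t\}) \, dt$, the uniform estimate $Mu_k(\mathrm{supp}\,\varphi) \le C$, the compact-set bound, and reverse Fatou give $\limsup_k \int_\O \varphi \, dMu_k \le \int_\O \varphi \, dMu$. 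These two inequalities establish the claimed weak convergence.
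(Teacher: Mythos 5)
This lemma is stated in the paper as a quoted result with citations to \cite[Lemma 1.2.3]{GutierrezBook} and \cite[Proposition 2.6]{FigalliBook}, and the paper supplies no proof of its own; your argument is correct and is essentially the standard proof from those references. The chain you use --- a uniform local slope bound giving $\sup_k Mu_k(K)<\infty$, upper semicontinuity of $Mu_k$ on compact sets via $\limsup_k \partial u_k(K)\subseteq \partial u(K)$ and reverse Fatou, lower semicontinuity on open sets via the Lebesgue-null set of slopes supporting $u$ at more than one point together with the sliding-plane construction, and the layer-cake reduction to these two bounds --- is exactly the classical route, so nothing further is needed.
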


Given a non-negative Borel measure $\nu$ on $\O$, we say that the convex function $u \in C(\O)$ is an \emph{Aleksandrov solution} of $\det D^2u = \nu$ in $\O$ if $Mu = \nu$ as measures. We also write $Mu \geq \nu$ in $\O$ (resp. $Mu \leq \nu$ in $\O$) if $Mu(E) \geq \nu(E)$ (resp. $Mu(E) \leq \nu(E)$) for all Borel sets $E \subset \O$. If $\nu$ is absolutely continuous with respect to $n$-dimensional Lebesgue measure and has a density $f$, then we will write $\det D^2 u = f$.

We next state the interior gradient estimate, the Aleksandrov maximum principle, and the comparison principle for Aleksandrov solutions.

\begin{Lemma}[{Interior Gradient Estimate; \cite[Lemma 3.2.1]{GutierrezBook}}]\label{gradientestimate} Suppose $u \in C(\overline{\O})$ is convex and vanishes on $\partial \O$. Then
\begin{equation}
|p| \leq \frac{\sup_{\O} |u|}{\dist(x,\partial \O)} \quad \text{for all } x \in \O, \ p \in \partial u(x).
\end{equation}
\end{Lemma}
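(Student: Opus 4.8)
The plan is to exploit convexity together with the vanishing boundary condition through a single elementary comparison along the direction of the subgradient. First I would record two preliminary facts. Since $u$ is convex on the convex set $\O$, continuous on $\overline{\O}$, and $u = 0$ on $\bO$, the maximum principle for convex functions gives $u \leq 0$ on $\overline{\O}$; in particular $-u(x) \leq \sup_{\O}|u|$ for every $x \in \O$ and $u(y) \leq 0$ for every $y \in \overline{\O}$. Second, although $\partial u(x)$ is defined by testing against $y \in \O$, the defining inequality $u(y) \geq u(x) + p\cdot(y-x)$ in fact holds for all $y \in \overline{\O}$: given such a $y$, the points $y_j := x + (1 - 1/j)(y - x)$ lie in $\O$ (as $\O$ is open, convex, and $x \in \O$), and passing to the limit $j \to \infty$ using the continuity of $u$ on $\overline{\O}$ yields the claim.

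With these in hand I would fix $x \in \O$ and $p \in \partial u(x)$, assuming $p \neq 0$ since the case $p = 0$ is trivial. Set $e := p/|p|$ and $d := \dist(x, \bO)$. Because $\dist(x,\bO) = \dist(x, \Rn \setminus \O)$ for the open convex set $\O$, we have $B_d(x) \subseteq \O$, hence $\overline{B_d(x)} \subseteq \overline{\O}$, and in particular the test point $y := x + d\,e$ lies in $\overline{\O}$. Applying the extended subgradient inequality at this $y$ gives
$$u(y) \geq u(x) + p \cdot (d\,e) = u(x) + d\,|p|.$$
Rearranging and using $u(y) \leq 0$ together with $-u(x) \leq \sup_{\O}|u|$ yields
$$d\,|p| \leq u(y) - u(x) \leq -u(x) \leq \sup_{\O}|u|,$$
which is exactly the asserted estimate after dividing by $d = \dist(x,\bO)$. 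The choice $y = x + d e$ is natural here because, among displacements of length $d$ (the radius for which we are guaranteed to stay in $\overline{\O}$), it maximizes $p\cdot(y-x)$.

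Since every step is a one-line consequence of convexity and of the definition of the subdifferential, there is no genuine obstacle; the only points requiring a moment's care are the closure facts — that $\overline{B_d(x)} \subseteq \overline{\O}$, so that $y = x + d e$ is an admissible test point, and that the subgradient inequality survives the passage from $\O$ to $\overline{\O}$ — together with isolating the trivial case $p = 0$. One could alternatively avoid invoking the closure by taking $y = x + t e$ for $t < d$ and letting $t \to d^-$, obtaining the same bound in the limit.
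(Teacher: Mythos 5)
Your argument is correct and complete: the extension of the subgradient inequality to $\overline{\O}$, the choice $y = x + \dist(x,\bO)\,p/|p|$, and the sign fact $u \leq 0$ give exactly the stated bound. The paper cites this lemma from Guti\'errez's book without reproducing a proof, and your argument is essentially the standard one given there, so there is nothing to add.
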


\begin{Theorem}[{Aleksandrov Maximum Principle; \cite[Theorem 1.4.2]{GutierrezBook}}]\label{alexandrov}  Suppose $u\in C(\overline{\O})$ is convex and vanishes on $\partial \O$. Then there exists a constant $C_n > 0$ depending only on the dimension $n$ such that
\begin{equation}
|u(x)|^n \leq C_n \text{diam}(\O)^{n-1} \text{dist}(x,\partial \O) Mu(\O) \quad \text{for all } x \in \O.
\end{equation}
\end{Theorem}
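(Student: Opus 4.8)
The plan is to reduce the estimate at a fixed point $x_0 \in \O$ to a volume computation for the subdifferential of an auxiliary cone. Write $h := |u(x_0)| = -u(x_0)$, $d := \dist(x_0,\bO)$, $D := \diam(\O)$, and let $v \in C(\overline{\O})$ be the convex function whose graph is the lower boundary of the convex hull of $\{(x_0,u(x_0))\} \cup (\bO \times \{0\})$; thus $v$ is affine along every ray emanating from $x_0$, $v(x_0) = u(x_0)$, and $v \equiv 0$ on $\bO$. The first step is the pointwise bound $u \le v$ on $\O$: any $y \in \O$ lies on a segment from $x_0$ to some $z \in \bO$, say $y = (1-\theta)x_0 + \theta z$, and convexity of $u$ with $u(z) = 0$ gives $u(y) \le (1-\theta)u(x_0) = v(y)$.

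Next I would show $\partial v(x_0) \subseteq \partial u(\O)$, which yields $Mu(\O) = \mathcal{L}^n(\partial u(\O)) \ge \mathcal{L}^n(\partial v(x_0))$ by monotonicity of Lebesgue measure. Given $p \in \partial v(x_0)$, the affine function $\ell(x) := u(x_0) + \la p, x - x_0 \ra$ satisfies $\ell \le v$ on $\O$, hence (by continuity and $v|_{\bO} = 0$) $\ell \le 0$ on $\bO$; since $u|_{\bO} = 0$ this gives $u - \ell \ge 0$ on $\bO$, while $(u-\ell)(x_0) = 0$. Therefore $u - \ell$ attains its minimum over $\overline{\O}$ at some interior point $x_1 \in \O$, and at such a point the affine function $x \mapsto u(x_1) + \la p, x - x_1 \ra$ lies below $u$, i.e. $p \in \partial u(x_1) \subseteq \partial u(\O)$.

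It remains to bound $\mathcal{L}^n(\partial v(x_0))$ from below. Maximizing the linear part of $\ell$ over the convex body $\overline{\O}$ shows that $\ell$ supports $v$ at the vertex exactly when $\la p, z - x_0\ra \le h$ for all $z \in \overline{\O}$, so
$$\partial v(x_0) = \left\{ p \in \Rn : \la p, z - x_0 \ra \le h \ \text{ for all } z \in \overline{\O} \right\}.$$
To obtain the sharp dependence on $d$ rather than on $D$, pick a nearest boundary point $\bar x \in \bO$ with $|\bar x - x_0| = d$ and set $\nu := (\bar x - x_0)/d$. Since the inscribed ball $B_d(x_0) \subseteq \O$ meets $\bO$ at $\bar x$, any supporting hyperplane of $\O$ at $\bar x$ must be tangent to $B_d(x_0)$ there, hence orthogonal to $\nu$; this forces the one-sided width bound $\la z - x_0, \nu \ra \le d$ for all $z \in \overline{\O}$, while trivially $\la z - x_0, w \ra \le D|w|$ for every $w \in \Rn$. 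Consequently $\partial v(x_0)$ contains the solid cone $\{ t\nu + w : w \perp \nu, \ 0 \le t \le h/d, \ |w| \le (h - td)/D \}$, whose $n$-dimensional volume equals $\frac{\omega_{n-1}}{n}\cdot\frac{h^n}{D^{n-1}d}$, where $\omega_{n-1}$ is the volume of the unit ball in $\R^{n-1}$. Chaining the three steps gives $h^n \le \frac{n}{\omega_{n-1}} D^{n-1} d \, Mu(\O)$, which is the assertion with $C_n = n/\omega_{n-1}$.

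The cone comparison and the touching argument are routine; the one point requiring genuine care is the geometric width estimate in the last paragraph — that the nearest-point direction yields the one-sided bound $\la z - x_0, \nu\ra \le d$, so that $\dist(x,\bO)$ (and not merely $\diam(\O)$) appears in the final inequality. The decisive observation there is that an inscribed ball $B_d(x_0)$ touching $\bO$ at $\bar x$ pins down the supporting hyperplane at $\bar x$ to be perpendicular to $\bar x - x_0$.
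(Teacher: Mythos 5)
Your proof is correct, and it is essentially the standard argument behind the cited result (Guti\'errez, Theorem 1.4.2), which the paper quotes without proof: compare $u$ with the cone $v$ having vertex $(x_0,u(x_0))$ and zero boundary values, show $\partial v(x_0)\subseteq \partial u(\O)$ by the sliding-plane/touching argument, and bound $\mathcal{L}^n(\partial v(x_0))$ below by the volume of a solid cone of height $h/d$ over a disk of radius $h/D$, using the supporting hyperplane at the nearest boundary point. Your only (harmless) variation is to exhibit that solid cone directly inside the half-space characterization of $\partial v(x_0)$ rather than taking the convex hull of the ball $B_{h/D}(0)$ and the point $(h/d)\nu$, yielding $C_n = n/\omega_{n-1}$.
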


\begin{Lemma} [{Comparison Principle; \cite[Theorem 1.4.6]{GutierrezBook}}]\label{comparison} Suppose $u, v \in C(\overline{\O})$ are convex and satisfy $u \geq v$ on $\partial \O$ and $Mu \leq Mv$ in $\O$. Then $u \geq v$ in $\O$.
\end{Lemma}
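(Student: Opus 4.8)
The plan is a proof by contradiction organized around the subdifferential image of the set on which $v$ exceeds $u$, combined with a quadratic perturbation of $v$ that manufactures a definite amount of strictness.

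Suppose $u \not\ge v$ somewhere in $\O$. Since $u - v \ge 0$ on $\partial\O$ and $u-v$ is continuous on the compact set $\overline{\O}$, its minimum $m$ over $\overline{\O}$ is negative and is attained at an interior point. Choose $d>0$ with $\overline{\O}\subset B_d(0)$ and set $q_\epsilon(x) := \frac{\epsilon}{2}(|x|^2-d^2)$ and $v_\epsilon := v + q_\epsilon$, which is again convex and continuous on $\overline{\O}$; note $q_\epsilon \le -c_0\epsilon$ on $\overline{\O}$ for a fixed $c_0>0$. Then $u \ge v \ge v_\epsilon + c_0\epsilon > v_\epsilon$ on $\partial\O$, whereas evaluating at the interior minimizer of $u-v$ shows that, for $\epsilon$ small enough, the open set $S_\epsilon := \{x\in\O : u(x) < v_\epsilon(x)\}$ is nonempty; fix such an $\epsilon$. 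The strict gap $u - v_\epsilon \ge c_0\epsilon > 0$ on $\partial\O$ forces $\overline{S_\epsilon}\Subset\O$ --- so $v$ and $v_\epsilon$ are Lipschitz in a neighborhood of $\overline{S_\epsilon}$ and the masses $Mu(S_\epsilon)$, $Mv(S_\epsilon)$, $Mv_\epsilon(S_\epsilon)$ are all finite --- and by continuity $u = v_\epsilon$ on $\partial S_\epsilon$.

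The heart of the argument is the inclusion $\partial v_\epsilon(S_\epsilon) \subseteq \partial u(S_\epsilon)$. Given $p \in \partial v_\epsilon(x_0)$ with $x_0 \in S_\epsilon$, the affine function $\ell(y) := v_\epsilon(x_0) + p\cdot(y-x_0)$ lies below $v_\epsilon$ on $\overline{\O}$, hence below $u$ on $\overline{\O}\setminus S_\epsilon$ (outside $S_\epsilon$ one has $u \ge v_\epsilon \ge \ell$, and on $\partial\O$ one has $u \ge v \ge v_\epsilon \ge \ell$), while $u(x_0) < v_\epsilon(x_0) = \ell(x_0)$. Consequently $u - \ell$ attains a negative minimum over $\overline{\O}$, necessarily at some point $x_1 \in S_\epsilon$; shifting $\ell$ down by that minimum value produces a supporting plane of $u$ at $x_1$ with slope $p$, so $p \in \partial u(x_1)$ and $x_1 \in S_\epsilon$. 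Passing to Lebesgue measure gives $Mv_\epsilon(S_\epsilon) \le Mu(S_\epsilon)$.

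To close the argument I would combine this inclusion with the hypothesis $Mu(S_\epsilon) \le Mv(S_\epsilon)$ and with the superadditivity bound $Mv_\epsilon(S_\epsilon) \ge Mv(S_\epsilon) + \epsilon^n\mathcal{L}^n(S_\epsilon)$. The latter follows from the elementary inequality $\det(A+\epsilon I) \ge \det A + \epsilon^n$ for positive semidefinite $A$: for smooth convex $g$ this gives $M(g+q_\epsilon) \ge Mg + \epsilon^n\mathcal{L}^n$ as measures, and the smoothness is then removed by mollifying $v$ and passing to the weak limit of Monge-Amp\`ere measures (Lemma \ref{weakconvergence}), the fixed term $\epsilon^n\mathcal{L}^n$ surviving the limit. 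Chaining the three relations yields $Mv(S_\epsilon) + \epsilon^n\mathcal{L}^n(S_\epsilon) \le Mv(S_\epsilon) < \infty$, so $\mathcal{L}^n(S_\epsilon) = 0$, contradicting that $S_\epsilon$ is nonempty and open. I expect the main difficulty to lie in the bookkeeping of the perturbation --- keeping $S_\epsilon$ simultaneously nonempty and compactly contained in $\O$, so that the masses are finite and the subtraction is legitimate, while retaining the boundary inequalities the supporting-plane argument requires --- together with the careful limiting argument for the superadditivity bound when $v$ is merely continuous; the measurability of $\partial u(S_\epsilon)$ and $\partial v_\epsilon(S_\epsilon)$ is standard for convex functions and I would simply cite it.
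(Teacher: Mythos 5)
Your proof is correct. The paper does not prove this lemma itself --- it simply cites \cite[Theorem 1.4.6]{GutierrezBook} --- and your argument (contradiction via the quadratic perturbation $v_\epsilon = v + \tfrac{\epsilon}{2}(|x|^2-d^2)$, the sliding supporting-plane inclusion $\partial v_\epsilon(S_\epsilon)\subseteq \partial u(S_\epsilon)$, and the strict measure gain $Mv_\epsilon \geq Mv + \epsilon^n\mathcal{L}^n$, obtained here by mollification and weak convergence) is essentially the standard proof found in that reference.
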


The following result due to Hartenstine \cite{Hartenstine06} shows that the Dirichlet problem for the Monge-Amp\`ere equation on any bounded, convex domain with zero boundary data always has a unique Aleksandrov solution; see also \cite[Theorem 2.1.3]{FigalliBook}.

\begin{Theorem} [{Solvability of Dirichlet Problem; \cite[Theorem 1]{Hartenstine06}}] Given a Borel measure $\nu$ with $\nu(\O) < \infty$, there exists a unique convex function $u \in C(\overline{\O})$ that is an Aleksandrov solution of the Dirichlet problem
$$
\begin{cases}
\det D^2u = \nu & \quad \text{in } \O, \\
u = 0 & \quad \text{on } \partial \O.
\end{cases}
$$
\end{Theorem}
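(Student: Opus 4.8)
For \emph{uniqueness} I would apply the comparison principle (Lemma \ref{comparison}) twice. If $u,v\in C(\overline{\O})$ are both convex Aleksandrov solutions, then $u=v=0$ on $\partial\O$ and $Mu=\nu=Mv$ in $\O$; in particular $Mu\le Mv$ and $u\ge v$ on $\partial\O$, so Lemma \ref{comparison} gives $u\ge v$ in $\O$, and exchanging the roles of $u$ and $v$ gives $u\equiv v$. For \emph{existence} the plan is to first construct a solution when $\nu$ is a finite combination of Dirac masses, and then to obtain the general case by weakly approximating $\nu$ by such measures and passing to the limit using the a priori estimates of Section \ref{sec:prelim}.

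\emph{Step 1 (base case).} Suppose $\nu=\sum_{i=1}^m a_i\delta_{p_i}$ with $a_i>0$ and $p_i\in\O$. Here I would produce the solution explicitly as a convex polyhedral function, following Aleksandrov: one looks for the piecewise-affine convex function $u$ whose graph is the lower convex envelope of the points $(p_i,h_i)$, $1\le i\le m$, together with $\partial\O\times\{0\}$, where the heights $h_i<0$ are chosen so that $Mu(\{p_i\})=a_i$ for every $i$. Existence of admissible heights follows from a topological argument: the map sending $(h_1,\dots,h_m)\in(-\infty,0)^m$ to the vector of masses $(Mu(\{p_1\}),\dots,Mu(\{p_m\}))$ is continuous, proper, and locally invertible, hence a homeomorphism onto $(0,\infty)^m$. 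Such a $u$ is automatically continuous on $\overline{\O}$ and vanishes on $\partial\O$.

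\emph{Step 2 (approximation and passage to the limit).} Fix a finite Borel measure $\nu$ with $\nu(\O)<\infty$. Choosing $\O_j\Subset\O$ with $\O_j\nearrow\O$, partitioning $\overline{\O_j}$ into finitely many small pieces, and concentrating the mass of $\nu$ at one interior point of each piece, I would build discrete measures $\nu_j$, supported in compact subsets of $\O$, with $\nu_j(\O)\le\nu(\O)$ and $\int_\O\varphi\,d\nu_j\to\int_\O\varphi\,d\nu$ for every $\varphi\in C_c(\O)$. Let $u_j\in C(\overline{\O})$ be the solution from Step 1 for $\nu_j$. The Aleksandrov maximum principle (Theorem \ref{alexandrov}) gives
\begin{equation*}
|u_j(x)|^n\le C_n\,\diam(\O)^{n-1}\,\dist(x,\partial\O)\,\nu_j(\O)\le C_n\,\diam(\O)^{n-1}\,\dist(x,\partial\O)\,\nu(\O),\qquad x\in\O,
\end{equation*}
so $\sup_{\O}|u_j|$ is bounded uniformly in $j$; together with convexity and the interior gradient estimate (Lemma \ref{gradientestimate}) this bounds the Lipschitz constants of the $u_j$ uniformly on each compact subset of $\O$. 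Passing to a subsequence, $u_j\to u$ locally uniformly on $\O$ for some convex $u$. The displayed inequality survives in the limit, so $|u(x)|\to 0$ as $x\to\partial\O$, and hence $u$ extends to a function in $C(\overline{\O})$ vanishing on $\partial\O$. Finally, by the weak continuity of Monge-Amp\`ere measures (Lemma \ref{weakconvergence}) and $Mu_j=\nu_j$ we obtain $\int_\O\varphi\,dMu=\lim_j\int_\O\varphi\,d\nu_j=\int_\O\varphi\,d\nu$ for all $\varphi\in C_c(\O)$, i.e. $Mu=\nu$ in $\O$; thus $u$ is the desired solution.

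The only genuinely non-formal input is the base case of Step 1 — the solvability for polyhedral data, which rests on the topological argument for the mass map rather than on soft analysis — while the one point where the convexity (as opposed to strict convexity or smoothness) of $\O$ must be handled with care is the continuous attainment of the zero boundary values by the limit $u$, which is exactly what the \emph{quantitative} form of Theorem \ref{alexandrov}, applied uniformly to the approximants $u_j$, resolves. An alternative to Step 1 is to replace the polyhedral construction by the classical smooth existence and regularity theory of Caffarelli--Nirenberg--Spruck for strictly positive smooth densities on smooth uniformly convex domains, applied after also exhausting $\O$ from inside by such domains and mollifying $\nu$; the roles of Theorem \ref{alexandrov}, Lemma \ref{gradientestimate}, Lemma \ref{weakconvergence}, and Lemma \ref{comparison} in the limiting argument are unchanged.
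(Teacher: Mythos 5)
A preliminary remark on the comparison you were asked against: the paper does not prove this statement at all — it is quoted as Theorem 1 of Hartenstine \cite{Hartenstine06} (see also \cite[Theorem 2.1.3]{FigalliBook}) — so your proposal can only be measured against the standard proofs in the cited literature. Your architecture is exactly the classical one: uniqueness by two applications of Lemma \ref{comparison}; existence by first solving for finite combinations of Dirac masses and then passing to the limit along a weakly convergent discrete approximation, with Theorem \ref{alexandrov} supplying both the uniform sup bound and (the one point where mere convexity of $\O$ matters) the uniform modulus forcing the limit to attain the zero boundary values, and Lemma \ref{weakconvergence} identifying the limit measure. Step 2 is sound as written, and you correctly avoid the circularity of invoking Lemma \ref{stability}, whose statement already presupposes solvability for the limit measure.

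The genuine gap is the topological argument in Step 1. The map $(h_1,\dots,h_m)\mapsto\bigl(Mu(\{p_1\}),\dots,Mu(\{p_m\})\bigr)$ on $(-\infty,0)^m$ is \emph{not} locally invertible, and not injective, so it cannot be a homeomorphism onto $(0,\infty)^m$: if some point $(p_i,h_i)$ lies strictly above the lower convex envelope generated by the remaining points together with $\partial\O\times\{0\}$, then the envelope — hence the whole mass vector — is unchanged under small perturbations of $h_i$, and the mass at $p_i$ is $0$ rather than positive. Concretely, for $\O=B_1\subset\R^2$, $p_1=0$, $h_1=-1$, $p_2=(1/2,0)$, every $h_2\in(-1/2,0)$ produces the same cone $|x|-1$, so the map is constant in $h_2$ on an open set. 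The correct versions of this argument restrict to the open set of height vectors for which every $(p_i,h_i)$ is a vertex of the envelope, where injectivity follows from the comparison principle and openness from Brouwer's invariance of domain, and then conclude surjectivity by a properness/connectedness argument (Aleksandrov's mapping lemma); alternatively, you can bypass the topological step entirely via the Perron method used in \cite[Theorem 1.6.2]{GutierrezBook} and by Hartenstine, taking $u=\sup\lb v\in C(\overline{\O})\ \text{convex}:\ Mv\ge\nu \text{ in }\O,\ v\le 0 \text{ on }\partial\O\rb$. With Step 1 repaired in either of these ways, the rest of your plan goes through.
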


Aleksandrov solutions of the Dirichlet problem with zero boundary conditions are closed under uniform limits, as shown by the following Lemma.

\begin{Lemma}[{Stability of Aleksandrov Solutions; \cite[Proposition 2.12]{FigalliBook}}] \label{stability} Let $\{\nu_k\}$ be a sequence of Borel measures in $\O$ such that $\sup_k \nu_k(\O) < \infty$ and let $u_k \in C(\overline{\O})$ be Aleksandrov solutions of the Dirichlet problem 
$$
\begin{cases}
\det D^2u_k = \nu_k & \quad \text{in } \O, \\
u_k = 0 & \quad \text{on } \partial \O.
\end{cases}
$$
If $\nu_k$ converges weakly to a Borel measure $\nu$ on $\O$, then $u_k$ converges locally uniformly to the Aleksandrov solution $u$ of the Dirichlet problem
$$
\begin{cases}
\det D^2u = \nu & \quad \text{in } \O, \\
u = 0 & \quad \text{on } \partial \O.
\end{cases}
$$
\end{Lemma}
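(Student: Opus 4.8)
The plan is to run the classical compactness argument for Aleksandrov solutions: first extract uniform estimates and a locally uniformly convergent subsequence, then identify its limit as the unique Aleksandrov solution of the Dirichlet problem with datum $\nu$, and finally promote subsequential convergence to convergence of the full sequence.

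First I would record uniform estimates. Each $u_k$ is convex and vanishes on $\partial\O$, so $u_k \le 0$ in $\O$; and since $Mu_k(\O) = \nu_k(\O) \le \sup_j \nu_j(\O) < \infty$, the Aleksandrov maximum principle (Theorem \ref{alexandrov}) yields, for every $x \in \O$,
$$|u_k(x)| \le \left(C_n \diam(\O)^{n-1}\, \sup_j \nu_j(\O)\right)^{\frac1n} \dist(x,\partial\O)^{\frac1n}.$$
Thus $\{u_k\}$ is uniformly bounded on $\overline\O$ and, what will be crucial, admits a modulus of continuity at $\partial\O$ that is independent of $k$. Next, I would fix an exhaustion $K_1 \Subset K_2 \Subset \cdots$ of $\O$ by compact convex sets with $\bigcup_i K_i = \O$; on each $K_i$ the interior gradient estimate (Lemma \ref{gradientestimate}) bounds $|p|$, for $p \in \partial u_k(x)$ and $x \in K_i$, by $\sup_\O |u_k| / \dist(K_i, \partial\O)$, so the $u_k$ are uniformly Lipschitz on $K_i$. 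By Arzel\`a--Ascoli and a diagonal argument there is a subsequence $\{u_{k_j}\}$ converging locally uniformly on $\O$ to a convex function $v$, and passing to the limit in the displayed bound gives $|v(x)| \le C \dist(x, \partial\O)^{\frac1n}$ for $x \in \O$, which shows $v$ extends continuously to $\overline\O$ with $v = 0$ on $\partial\O$.

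To identify $v$, I would invoke the weak convergence of Monge--Amp\`ere measures (Lemma \ref{weakconvergence}): since $u_{k_j} \to v$ locally uniformly, $\int_\O \varphi\, dMu_{k_j} \to \int_\O \varphi\, dMv$ for all $\varphi \in C_c(\O)$; but $Mu_{k_j} = \nu_{k_j}$ and $\nu_{k_j} \to \nu$ weakly, so uniqueness of weak limits forces $Mv = \nu$ as measures on $\O$. Hence $v$ is an Aleksandrov solution of $\det D^2 v = \nu$ in $\O$ vanishing on $\partial\O$, and by the solvability and uniqueness theorem for this Dirichlet problem stated above, $v = u$. Since the same reasoning identifies the limit of \emph{every} locally uniformly convergent subsequence of $\{u_k\}$ as $u$, and $\{u_k\}$ is precompact in the locally uniform topology on $\O$ by the estimates above, the full sequence converges locally uniformly to $u$.

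The step I expect to be the only real obstacle is verifying that the interior limit $v$ has the correct boundary values: a locally uniform interior limit of functions vanishing on $\partial\O$ need not vanish on $\partial\O$ in general. What rescues the argument is the explicit $\dist(x, \partial\O)$-dependence built into the Aleksandrov maximum principle, which converts the single hypothesis $\sup_k \nu_k(\O) < \infty$ into a $k$-independent modulus of continuity at $\partial\O$; everything else is routine compactness combined with the stability of Monge--Amp\`ere measures under uniform convergence.
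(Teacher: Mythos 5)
Your proof is correct: the uniform bound and boundary modulus from the Aleksandrov maximum principle (Theorem \ref{alexandrov}), interior Lipschitz bounds from Lemma \ref{gradientestimate}, Arzel\`a--Ascoli, identification of the limit via Lemma \ref{weakconvergence} together with uniqueness for the Dirichlet problem, and the subsequence argument constitute the standard stability proof, and the only delicate point (recovering the zero boundary values of the limit) is handled exactly by the $\dist(x,\partial\Omega)$-dependence in the Aleksandrov estimate as you say. The paper does not prove this lemma but quotes it from \cite[Proposition 2.12]{FigalliBook}, and your argument is essentially the same as in that reference, so there is nothing further to address.
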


A hallmark result in the theory of Monge-Amp\`ere equations is the strict convexity and regularity of Aleksandrov solutions established by Caffarelli in the seminal works \cite{CaffarelliW2p, CaffarelliLocalization, CaffarelliRegularity}. We summarize these important contributions as follows.

\begin{Theorem}[{Regularity Results for Aleksandrov solutions; see also \cite[Corollaries 4.11, 4.21, and 4.43]{FigalliBook} and \cite[Theorem 5.4.8]{GutierrezBook}}] \label{regularityofMAeqn} 

Let $u$ be an Aleksandrov solution of the Dirichlet problem
$$
\begin{cases}
\det D^2u = f & \quad \text{in } \O, \\
u = 0 & \quad \text{on } \partial \O.
\end{cases}
$$
Suppose there exist constants $C_1, C_2 > 0$ such that $C_1 \leq f \leq C_2$ in $\O$. Then the following results hold:
\begin{itemize}
\item[(i)] $u$ is strictly convex and $u \in C^{1,\a}_{loc}(\O)$.
\item[(ii)] If $f \in C^{\a}(\O)$, then $u \in C^{2,\a}_{loc}(\O)$.
\item[(iii)] If $f \in C^{\infty}(\O)$, then $u \in C^{\infty}(\O)$.
\end{itemize}
\end{Theorem}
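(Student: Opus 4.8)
The plan is to follow Caffarelli's interior regularity theory \cite{CaffarelliW2p, CaffarelliLocalization, CaffarelliRegularity}, in three steps: strict convexity together with $C^{1,\a}_{loc}$; then $C^{2,\a}_{loc}$ when $f\in C^\a$, by perturbation off the constant right-hand side; then $C^\infty$ by a linear bootstrap. I would begin with strict convexity. First note that $u<0$ in $\O$: since $u$ is convex and vanishes on $\bO$ we have $u\le0$, and if $u(x_0)=0$ at an interior point, then convexity and $u\le0$ force $u(x_0\pm re)=0$ for small $r$, so $\{u=0\}$ is relatively open -- and it is relatively closed -- in the connected set $\O$, hence $u\equiv0$, contradicting $Mu(\O)=\int_\O f\ge C_1\mathcal{L}^n(\O)>0$. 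Now suppose $u$ agreed with a supporting affine function $\ell$ on a set $W=\{u=\ell\}$ containing an interior point $x_0$ and at least one other point, so that (by convexity of $u$ and $\ell$) the compact convex set $\overline W\subset\overline\O$ contains a nondegenerate segment through $x_0$. Caffarelli's localization theorem \cite{CaffarelliLocalization} -- proved by a volume-and-mass estimate on thin sections near a putative interior extreme point, using $C_1\le f\le C_2$ and the Aleksandrov maximum principle (Theorem \ref{alexandrov}) -- asserts that no extreme point of $\overline W$ lies in $\O$. The maximal segment $[a,b]$ of $\overline W$ through $x_0$ then has endpoints that are extreme points of $\overline W$ (else the segment could be extended), so $a,b\in\bO$; there $u=0$ and, by continuity, $u=\ell$, so the affine function $\ell$ vanishes at $a$ and $b$, hence on all of $[a,b]$, whence $u\equiv0$ on a segment through the interior point $x_0$ -- contradicting $u<0$ in $\O$. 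Therefore $u$ is strictly convex in $\O$.

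Given strict convexity, I would obtain $u\in C^{1,\a}_{loc}(\O)$ via Caffarelli's interior $C^{1,\a}$ estimate. Strict convexity makes the sections $S_t(x):=\{y\in\O:u(y)<\ell_x(y)+t\}$, where $\ell_x$ supports $u$ at $x$, compactly contained in $\O$ and shrinking to $\{x\}$ as $t\to0^+$. Applying the Aleksandrov maximum principle in each section, together with $C_1\mathcal{L}^n(E)\le Mu(E)\le C_2\mathcal{L}^n(E)$ and the affine normalization from John's lemma making $S_t(x)$ comparable to a ball, a compactness argument shows the eccentricity of these normalized sections stays bounded as $t\to0^+$, uniformly for $x$ in a compact $K\Subset\O$: otherwise a rescaling limit would be a global convex solution affine along a line, contradicting strict convexity. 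This quantitative control of sections is exactly a Hölder modulus of continuity for the subdifferential of $u$ on $K$, giving $u\in C^{1,\a}_{loc}(\O)$ for some $\a\in(0,1)$. (Equivalently one may read this off the strict convexity of the Legendre transform $u^*$, which solves a Monge-Amp\`ere equation with right-hand side between $1/C_2$ and $1/C_1$.)

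For the Schauder estimate I would argue perturbatively in normalized sections. Fix $K\Subset\O$, $x_0\in K$; by the previous step $S_h(x_0)\Subset\O$ for small $h$ and, after the affine normalization $T$ from John's lemma, $B_1\subset T(S_h(x_0))\subset B_n$. Rescaling $u-\ell_{x_0}-h$ by the appropriate constants produces a convex Aleksandrov solution $v$ on $T(S_h(x_0))$ with zero boundary values and $\det D^2v=\tilde f$, where $\tilde f$ is still pinched between two positive constants and has $C^\a$ seminorm bounded by $[f]_{C^\a(\O)}$ times a positive power of $h$. Let $w$ solve $\det D^2w=\tilde f(0)$ with the same boundary values; $w$ is a strictly convex solution with constant right-hand side, hence smooth in the interior with a priori estimates by the classical Dirichlet theory, and the comparison principle (Lemma \ref{comparison}) together with the Aleksandrov estimate bounds $\Norm[L^\infty]{v-w}$ by $\osc\tilde f$, a power of $h$. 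Undoing the normalization and iterating on the dyadic family $S_{2^{-k}h}(x_0)$ shows $u$ is approximated near $x_0$ by convex quadratic polynomials at a geometric rate, which by a Campanato-type argument yields $u\in C^{2,\a}$ at $x_0$, with an estimate depending only on $n$, $C_1$, $C_2$, $[f]_{C^\a}$ and $\dist(K,\bO)$; this is Caffarelli's interior Schauder estimate \cite{CaffarelliRegularity}.

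Finally, for $C^\infty$, given $u\in C^{2,\a}_{loc}$ I would bootstrap through the linearized equation. On any $K\Subset\O$, $D^2u$ is continuous and $\det D^2u=f\ge C_1>0$ rules out a vanishing eigenvalue, so by compactness $\lambda\Id\le D^2u\le\Lambda\Id$ on $K$ for some $0<\lambda\le\Lambda$ (the upper bound from $\det D^2u\le C_2$). Differentiating $\log\det D^2u=\log f$ in a coordinate direction $e$ gives the linear, uniformly elliptic equation $u^{ij}\,\partial_{ij}(\partial_e u)=\partial_e\log f$ on $K$, whose coefficients $u^{ij}$ (the inverse Hessian) are $C^\a$ and whose right-hand side is $C^\a$ (as $f\in C^\infty$ and $f\ge C_1>0$). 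Interior Schauder for linear equations gives $\partial_e u\in C^{2,\a}_{loc}$, i.e.\ $u\in C^{3,\a}_{loc}$; iterating, $u^{ij}$ and $\partial_e\log f$ gain a derivative at each stage, so $u\in C^{k,\a}_{loc}(\O)$ for all $k$, hence $u\in C^\infty(\O)$. I expect the crux to be the first step: Caffarelli's localization theorem for strict convexity and the normalization-and-compactness scheme behind the $C^{1,\a}$ estimate. Once strict convexity and $C^{1,\a}$ are in hand, the Schauder estimate is a genuinely perturbative comparison with the constant-coefficient Monge-Amp\`ere equation, and the passage to $C^\infty$ is a standard linear bootstrap.
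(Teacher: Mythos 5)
This theorem is quoted background: the paper supplies no proof of it, deferring entirely to Caffarelli's papers and the expositions in Figalli's and Guti\'errez's books, and your outline follows exactly that standard route — the localization theorem for strict convexity, section geometry plus John normalization and compactness for $C^{1,\a}_{loc}$, a perturbative comparison with the constant right-hand side problem on normalized sections for the interior Schauder estimate, and then a linear Schauder bootstrap on $u^{ij}\partial_{ij}(\partial_e u)=\partial_e \log f$ for smoothness. You invoke the hard ingredients (the localization theorem, the $C^{1,\a}$ and $C^{2,\a}$ interior estimates) rather than prove them, which is consistent with how the paper itself treats this result, so as a reconstruction of the cited argument the structure is right.

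One step in your strict convexity argument is wrong as written: the endpoints of a maximal segment of $\overline W$ through $x_0$ need not be extreme points of $\overline W$. (In the unit square, a horizontal chord through the center is maximal, yet its endpoints are midpoints of the vertical edges, not corners.) If an endpoint $a$ fails to be extreme, you only get some segment of $\overline W$ containing $a$ in its relative interior, not one collinear with $[a,b]$, so ``else the segment could be extended'' does not follow. The standard repair: by Minkowski's theorem the compact convex contact set $\overline W$ is the convex hull of its extreme points, which by the localization theorem all lie on $\bO$, where $u=0$ and hence $\ell=0$; since $\ell$ is affine, it vanishes on convex combinations of these points, i.e.\ on all of $\overline W$, so $u(x_0)=\ell(x_0)=0$, contradicting $u<0$ in $\O$. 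With that substitution your sketch matches the proofs in the references the paper cites.
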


Standard bootstrap arguments using Theorem \ref{regularityofMAeqn} show that Aleksandrov solutions of the Monge-Amp\`ere eigenvalue problem are strictly convex and smooth in the interior (see \cite[Proposition 2.8]{Le18}).

\begin{Proposition} [Interior Regularity]\label{interiorregularity}  Let $\sigma, p > 0$ be fixed constants. Suppose $u \in C(\overline{\O})$ is a non-zero Aleksandrov solution of the Dirichlet problem
$$
\begin{cases}
\det D^2u = \sigma (-u)^p & \quad \text{in } \O, \\
u = 0 & \quad \text{on } \partial \O.
\end{cases}
$$
Then $u$ is strictly convex and $u \in C^{\infty}(\O) \cap C(\overline{\O})$.
\end{Proposition}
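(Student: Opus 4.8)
The plan is to establish the result in three steps. First I would show that $u<0$ everywhere in $\O$; this is the one non-obvious input, and it is exactly what brings the equation within the scope of the interior regularity theory. Second, I would apply Theorem \ref{regularityofMAeqn} on subdomains compactly contained in $\O$ to obtain strict convexity and an initial $C^{2,\a}_{loc}$ estimate. Third, I would run a standard Schauder bootstrap for the (by then uniformly elliptic) equation to reach $C^{\infty}(\O)$. Note that the hypothesis of Theorem \ref{regularityofMAeqn} that the right-hand side lie between two positive constants cannot hold on all of $\O$, since $\sigma(-u)^p$ degenerates to $0$ near $\partial\O$; hence all conclusions are necessarily interior, which is consistent with the statement, while $u\in C(\overline{\O})$ is simply the given hypothesis.

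For the first step: since $u$ is an Aleksandrov solution it is convex, and for any $x\in\O$ a line through $x$ meets $\partial\O$ in two points between which $x$ lies, so convexity together with $u|_{\partial\O}=0$ gives $u(x)\le 0$; thus $u\le 0$ in $\O$. Suppose $u(x_0)=0$ for some $x_0\in\O$. For each $y\in\partial\O$, the restriction of $u$ to the segment $[x_0,y]\subset\overline{\O}$ is convex, non-positive, and vanishes at both endpoints, hence vanishes identically on $[x_0,y]$. Since every point of $\O$ lies on such a segment (extend the ray from $x_0$ through the point until it exits $\overline{\O}$), we conclude $u\equiv 0$, contradicting that $u$ is non-zero. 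Therefore $-u>0$ throughout $\O$.

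For the second and third steps, fix $\O'\Subset\O$. By continuity, $0<\min_{\overline{\O'}}(-u)\le\max_{\overline{\O'}}(-u)\le||u||_{L^{\infty}(\O)}<\infty$, so $f:=\sigma(-u)^p$ satisfies $0<C_1\le f\le C_2$ on $\O'$ for constants depending on $\O'$. Theorem \ref{regularityofMAeqn}(i) then gives that $u$ is strictly convex and $u\in C^{1,\a}_{loc}(\O')$; letting $\O'\nearrow\O$ yields strict convexity and $u\in C^{1,\a}_{loc}(\O)$. Because $-u$ is positive and locally bounded away from $0$, and $t\mapsto t^p$ is smooth on $(0,\infty)$, the composition $f=\sigma(-u)^p$ inherits the local regularity of $u$; in particular $f\in C^{\a}_{loc}(\O)$, so Theorem \ref{regularityofMAeqn}(ii) upgrades $u$ to $C^{2,\a}_{loc}(\O)$. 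On any $\O'\Subset\O$ the matrix $D^2u$ is then bounded above while $\det D^2u=f\ge C_1>0$, so the cofactor matrix $\Phi_u=(\det D^2u)(D^2u)^{-1}$ has eigenvalues between positive constants; differentiating $\det D^2u=f$ in a direction $e$ gives the uniformly elliptic equation $\tr(\Phi_u D^2(\partial_e u))=\partial_e f$ on $\O'$. An induction now closes the argument: if $u\in C^{k,\a}_{loc}(\O)$ with $k\ge 2$, then $\Phi_u\in C^{k-1,\a}_{loc}(\O)$ and $f=\sigma(-u)^p\in C^{k,\a}_{loc}(\O)$, so interior Schauder estimates give $\partial_e u\in C^{k,\a}_{loc}(\O)$ for every $e$, i.e. $u\in C^{k+1,\a}_{loc}(\O)$; iterating gives $u\in C^{\infty}(\O)$.

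The main obstacle, though it is not a severe one, is the first step, the interior strict negativity $-u>0$: this is precisely what licenses the use of Caffarelli's interior regularity results on subdomains $\O'\Subset\O$, since otherwise $\sigma(-u)^p$ could vanish at interior points and the equation would degenerate there. Once that is secured, the remaining steps are routine applications of Theorem \ref{regularityofMAeqn}, together with the observation that composition with the smooth function $t\mapsto t^p$ preserves the local H\"older regularity of the positive function $-u$, plus the classical Schauder bootstrap for the linearized Monge-Amp\`ere operator.
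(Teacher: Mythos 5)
Your overall strategy (interior positivity of $-u$, then Caffarelli's regularity, then a Schauder bootstrap) is exactly the ``standard bootstrap using Theorem \ref{regularityofMAeqn}'' that the paper has in mind (it simply cites Le's Proposition 2.8), but two of your justifications do not hold as written. First, the negativity step: your claim that the restriction of $u$ to a segment $[x_0,y]$ which is convex, non-positive and vanishes at both endpoints must vanish identically is false --- $t\mapsto t^2-t$ on $[0,1]$ is a counterexample. The correct (and equally short) argument is that if $u(x_0)=0$ at an interior point, then $x_0$ is an interior maximum of the convex function $u$ (since $u\le 0$), and a convex function attaining its maximum at an interior point of a convex set is constant: for any $y\in\O$ pick $z\in\O$ on the ray from $y$ through $x_0$ slightly beyond $x_0$, write $x_0=\lambda y+(1-\lambda)z$ and use $0=u(x_0)\le\lambda u(y)+(1-\lambda)u(z)\le 0$ to force $u(y)=0$. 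This is a repairable slip, but the inference you wrote would fail.

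The more serious gap is the second step. Theorem \ref{regularityofMAeqn} is stated for Aleksandrov solutions of the Dirichlet problem with \emph{zero boundary data on the domain where it is applied}; on an arbitrary $\O'\Subset\O$ the boundary values of $u$ are not zero, so its hypotheses are simply not satisfied there. This is not a formality: having $0<C_1\le f\le C_2$ on a subdomain alone does \emph{not} imply strict convexity or interior $C^{1,\a}$ regularity --- Pogorelov's example in dimension $n\ge 3$ solves $\det D^2u=f$ with smooth positive $f$ near the origin and is neither strictly convex nor $C^2$ --- so the conclusion you want cannot be extracted from the data you invoke on $\O'$. The standard repair is to localize on sublevel sets rather than arbitrary subdomains: for $\d>0$ small, $\O_\d:=\{x\in\O:\,u(x)<-\d\}$ is open, convex, compactly contained in $\O$, and these sets exhaust $\O$ by your (corrected) first step; moreover $v:=u+\d$ is an Aleksandrov solution on $\O_\d$ with $v=0$ on $\partial\O_\d$ and $\sigma\d^p\le \det D^2v\le\sigma\Norm[L^\infty(\O)]{u}^p$. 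Theorem \ref{regularityofMAeqn} then applies legitimately on each $\O_\d$, after which your observation that $f=\sigma(-u)^p$ inherits the local regularity of $u$ and your Schauder bootstrap for the linearized operator go through on each $\O_\d$ (with the minor correction that $u\in C^{k,\a}_{loc}$ gives $\Phi_u\in C^{k-2,\a}_{loc}$, not $C^{k-1,\a}_{loc}$, which still suffices to gain one derivative per step), yielding strict convexity and $C^\infty$ smoothness on all of $\O$.
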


We next prove a continuity property of the Monge-Amp\`ere energy, $I(u) = \int_{\O} (-u) d Mu$ along a sequence of convex functions $\{v_k\}$ converging uniformly and satisfying uniform upper bounds on $Mv_k$ with respect to Lebesgue measure (cf. \cite[Proposition 1.1]{Tso90}).

\begin{Lemma}\label{continuityofenergy} Suppose $v_k \in C(\overline{\O})$ are convex functions converging uniformly on $\overline{\O}$ to a function $v$, and there exists a constant $\Lambda > 0$ such that $Mv_k \leq \Lambda \mathcal{L}^n$ for all $k \geq 0$. Then $\lim\limits_{k \to \infty} I(v_k) = I(v)$. \end{Lemma}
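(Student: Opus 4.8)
The plan is to control the difference $I(v_k) - I(v) = \int_\O (-v_k)\, dMv_k - \int_\O (-v)\, dMv$ by splitting it as
\[
\int_\O (-v_k)\, dMv_k - \int_\O (-v)\, dMv_k + \int_\O (-v)\, dMv_k - \int_\O (-v)\, dMv =: A_k + B_k,
\]
and showing $A_k \to 0$ and $B_k \to 0$ separately. For $A_k$, the bound $Mv_k \leq \Lambda \mathcal L^n$ gives $|A_k| \leq \Lambda \int_\O |v_k - v| \leq \Lambda \mathcal L^n(\O)\, \|v_k - v\|_{L^\infty(\O)} \to 0$, which is immediate from uniform convergence. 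The term $B_k$ is the genuinely nontrivial one: it asks for convergence of $\int_\O (-v)\, dMv_k$ to $\int_\O (-v)\, dMv$, i.e. weak convergence of the measures $Mv_k$ tested against the (possibly non-compactly-supported, and only continuous, not $C^\infty$) function $-v$. Since $v = 0$ on $\partial\O$ while the $v_k$ also vanish on $\partial\O$, and $-v \geq 0$ is continuous on $\overline\O$ vanishing on the boundary, the issue is purely about mass of $Mv_k$ escaping to $\partial\O$.

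To handle $B_k$, first note that by Lemma \ref{weakconvergence} (weak convergence of Monge-Amp\`ere measures under local uniform convergence) we have $\int_\O \varphi\, dMv_k \to \int_\O \varphi\, dMv$ for every $\varphi \in C_c(\O)$. The standard device is to fix $\e > 0$, pick a cutoff $\eta_\e \in C_c(\O)$ with $0 \leq \eta_\e \leq 1$ and $\eta_\e \equiv 1$ on $\O_\e := \{x \in \O : \dist(x,\partial\O) > \e\}$, and write $-v = (-v)\eta_\e + (-v)(1-\eta_\e)$. The first piece $(-v)\eta_\e \in C_c(\O)$ is handled by Lemma \ref{weakconvergence}. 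For the tail piece, use $0 \leq (-v)(1-\eta_\e) \leq \|v\|_{L^\infty} \mathbf 1_{\O \setminus \O_\e}$, so that
\[
\Big| \int_\O (-v)(1-\eta_\e)\, dMv_k \Big| \leq \|v\|_{L^\infty}\, Mv_k(\O \setminus \O_\e).
\]
Here I would invoke the uniform bound $Mv_k \leq \Lambda \mathcal L^n$, giving $Mv_k(\O\setminus\O_\e) \leq \Lambda\, \mathcal L^n(\O\setminus\O_\e)$, which tends to $0$ as $\e \to 0$ uniformly in $k$; the analogous bound for $Mv$ follows since weak limits preserve the inequality $Mv \leq \Lambda\mathcal L^n$ (or directly by testing). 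Combining: $\limsup_k |B_k| \leq 2\Lambda \|v\|_{L^\infty} \mathcal L^n(\O\setminus\O_\e)$ for every $\e$, hence $B_k \to 0$.

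The main obstacle is exactly this control of boundary-concentration of the measures $Mv_k$, and it is the place where the hypothesis $Mv_k \leq \Lambda\mathcal L^n$ is essential — without it, mass of $Mv_k$ could concentrate on $\partial\O$ in the limit and $B_k$ would not vanish (this is why the energy is only \emph{upper} semicontinuous in general, and why Tso's analogous statement in \cite[Proposition 1.1]{Tso90} also requires such a bound). Everything else is routine: the cutoff construction, the triangle-inequality split, and the elementary estimate on $A_k$. One should also record at the outset that all integrals are finite, e.g. $I(v_k) \leq \|v_k\|_{L^\infty} Mv_k(\O) \leq \Lambda \|v_k\|_{L^\infty}\mathcal L^n(\O) < \infty$ and similarly for $v$, so the quantities in question are well-defined real numbers.
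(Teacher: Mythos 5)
Your proof is correct and follows essentially the same route as the paper's: both arguments rest on Lemma \ref{weakconvergence} for compactly supported test functions, a cutoff near $\partial\O$ with the tail mass of $Mv_k$ (and of $Mv$, via the fact that the bound $Mv_k\leq\Lambda\mathcal{L}^n$ survives the weak limit on open sets) controlled by $\Lambda\mathcal{L}^n(\O\setminus\O_\e)$, and uniform convergence plus the total mass bound to compare $v_k$ with $v$. The only difference is cosmetic: you split off the $A_k$ term globally before introducing the cutoff, whereas the paper performs the cutoff first and handles the $v_k$-versus-$v$ comparison inside it.
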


\begin{proof} Let $\varphi \in C_c(\O)$ be arbitrary. We have
\begin{align*}
\bigg|  \int_{\O} \varphi v \ dMv - \int_{\O} \varphi v_k \ dMv_k \bigg| 
& \leq \bigg| \int_{\O} \varphi v \ dMv - \int_{\O} \varphi v \ dMv_k  \bigg| + \bigg| \int_{\O} \varphi(v - v_k) \ dMv_k \bigg| \\
& \leq \bigg| \int_{\O} \varphi v \ dMv - \int_{\O} \varphi v \ dMv_k  \bigg| + ||\varphi||_{L^{\infty}(\O)}||v - v_k||_{L^{\infty}(\O)} Mv_k(\O) \\
& \leq  \bigg| \int_{\O} \varphi v \ dMv - \int_{\O} \varphi v \ dMv_k  \bigg| + ||\varphi||_{L^{\infty}(\O)}||v - v_k||_{L^{\infty}(\O)} \Lambda\mathcal{L}^n(\O) \\
& =: A_k + B_k.
\end{align*}
By Lemma \ref{weakconvergence}, we know $\lim_{k \to \infty} A_k = 0$ while $\lim_{k \to \infty} B_k = 0$ due to the uniform convergence of $v_k$ to $v$. Therefore, 
\begin{equation}\label{convergencewithcutoff}
\lim_{k \to \infty} \int_{\O} \varphi v_k \ dMv_k = \int_{\O} \varphi v \ dMv \quad \text{for all } \varphi \in C_c(\O).
\end{equation}
Now let $\e > 0$ be fixed and let $\O_{\e}$ be an open set such that $\O_{\e} \Subset \O$ and $\mathcal{L}^n(\O \setminus \overline{\O_{\e}}) \leq \e$. Let $\psi_{\e} \in C_c(\O)$ be such that $0 \leq \psi_{\e} \leq 1$ in $\O$ and $\psi_{\e} \equiv 1$ on $\overline{\O_{\e}}$. Then, for any $k \geq 0$, we can write
\begin{align*}
I(v_k) - I(v) & = \int_{\O} v \ dMv - \int_{\O} v_k \ dMv_k \\
& = \int_{\O} \psi_{\e} v \ dMv - \int_{\O} \psi_{\e} v_k \ dMv_k+\int_{\O} (1 - \psi_{\e}) v \ dMv - \int_{\O} (1 - \psi_{\e}) v_k \ dMv_k \\
& = \int_{\O} \psi_{\e} v \ dMv - \int_{\O} \psi_{\e} v_k \ dMv_k + \int_{\O  \setminus \overline{\O_{\e}}} (1 - \psi_{\e}) v \ dMv - \int_{\O  \setminus \overline{\O_{\e}}} (1 - \psi_{\e}) v_k \ dMv_k.
\end{align*}
Since $Mv_k \leq  \Lambda \mathcal{L}^n$ for all $k \geq 0$, the lower semicontinuity on open sets of the Monge-Amp\`ere measure under uniform convergence (see \cite[Lemma 1.2.2 (ii)]{GutierrezBook}) implies $Mv(U) \leq \Lambda \mathcal{L}^n(U)$ for any open set $U \subset \O$. Therefore,
$$\bigg| \int_{\O  \setminus\overline{\O_{\e}}} (1 - \psi_{\e}) v \ dMv \bigg| \leq ||1 - \psi_{\e}||_{L^{\infty}(\O)}||v||_{L^{\infty}(\O)} Mv(\O  \setminus \overline{\O_{\e}}) \leq ||v||_{L^{\infty}(\O)} \Lambda \mathcal{L}^n(\O  \setminus \overline{\O_{\e}}) \leq C_1 \e,$$
where $C_1 > 0$ is a constant independent of $\e$. Similarly,
$$\bigg| \int_{\O  \setminus \overline{\O_{\e}}} (1 - \psi_{\e}) v_k \ dMv_k \bigg| \leq ||1 - \psi_{\e}||_{L^{\infty}(\O)}||v_k||_{L^{\infty}(\O)} Mv_k(\O  \setminus \O_{\e}) \leq ||v_k||_{L^{\infty}(\O)} \Lambda \mathcal{L}^n(\O  \setminus\overline{\O_{\e}}) \leq C_2 \e,$$
where $C_2 > 0$ is a constant independent of $\e$ and $k$.
Therefore, there exists a constant $C > 0$ independent of $k$ and $\e$ such that
$$
|I(v_k) - I(v)| \leq \bigg| \int_{\O} \psi_{\e} v \ dMv - \int_{\O} \psi_{\e} v_k \ dMv_k \bigg| + C\e.
$$
Consequently, by \eqref{convergencewithcutoff}, we have
$$\limsup_{k \to \infty} |I(v_k) - I(v)|  \leq C\e.$$
Since $\e > 0$ was arbitrary, we conclude that
$$\lim\limits_{k \to \infty} I(v_k)=I(v).$$ 
\end{proof}

We conclude this section by showing that if $u \in C(\overline{\O})$ is convex and vanishes on $\partial \O$, then all $L^p$ norms of $u$ are comparable.

\begin{Lemma}\label{normequivalence}
If $u \in C(\overline{\O})$ is convex and vanishes on $\partial \O$, then
$$\frac{||u||_{L^{\infty}(\O)}}{n+1} \leq \left( \frac{1}{|\O|} \int_{\O} |u|^p \right)^{\frac{1}{p}} \leq ||u||_{L^{\infty}(\O)} \quad \text{for all } p \geq 1.$$
\end{Lemma}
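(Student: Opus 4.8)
The plan is to treat the two inequalities separately, the upper bound being essentially trivial and the lower bound requiring a comparison with a cone. For the upper bound, since $|u(x)| \le \|u\|_{L^{\infty}(\O)}$ for every $x \in \O$, we immediately get $\frac{1}{|\O|}\int_{\O}|u|^p \le \|u\|_{L^{\infty}(\O)}^p$, and taking $p$-th roots finishes this direction for all $p \ge 1$.

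For the lower bound I would first reduce to the case $p = 1$. Working with the probability measure $d\mu := dx/|\O|$ on $\O$ and applying Jensen's inequality to the convex function $t \mapsto t^p$ (convex since $p \ge 1$) gives $\left(\int_{\O}|u|\,d\mu\right)^p \le \int_{\O}|u|^p\,d\mu$, i.e.
$$\frac{1}{|\O|}\int_{\O}|u| \;\le\; \left(\frac{1}{|\O|}\int_{\O}|u|^p\right)^{1/p} \qquad \text{for all } p \ge 1.$$
Hence it suffices to prove $\dfrac{\|u\|_{L^{\infty}(\O)}}{n+1} \le \dfrac{1}{|\O|}\int_{\O}|u|$.

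To establish this, set $M := \|u\|_{L^{\infty}(\O)}$; if $M = 0$ there is nothing to prove, so assume $M > 0$. Since $u$ is convex and vanishes on $\partial\O$, we have $u \le 0$ on $\overline{\O}$, and the value $-M = \min_{\overline{\O}} u$ is attained at some interior point $x_0 \in \O$ (it cannot be attained on $\partial\O$ where $u = 0$). Because $\O$ is bounded and convex with $x_0 \in \O$, every $x \in \O$ lies on a segment $x = (1-t)x_0 + ty$ with $y \in \partial\O$ and $t \in [0,1]$, so convexity of $u$ yields $u(x) \le (1-t)u(x_0) + t\,u(y) = -M(1-t) =: v(x)$, whence $|u| \ge -v \ge 0$ pointwise on $\O$. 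Writing points as $x = x_0 + r\omega$ with $\omega \in S^{n-1}$ and $0 \le r \le \rho(\omega)$, where $\rho(\omega)$ is the length of the segment joining $x_0$ to $\partial\O$ in the direction $\omega$, one has $-v(x_0 + r\omega) = M\bigl(1 - r/\rho(\omega)\bigr)$, and a direct computation in polar coordinates gives
$$\int_{\O}(-v)\,dx = \int_{S^{n-1}}\!\!\int_0^{\rho(\omega)} M\Bigl(1-\tfrac{r}{\rho(\omega)}\Bigr) r^{n-1}\,dr\,d\omega = \frac{M}{n(n+1)}\int_{S^{n-1}}\rho(\omega)^n\,d\omega.$$
Since the same change of variables gives $|\O| = \frac{1}{n}\int_{S^{n-1}}\rho(\omega)^n\,d\omega$, we conclude $\int_{\O}(-v)\,dx = \frac{M|\O|}{n+1}$, and therefore $\frac{1}{|\O|}\int_{\O}|u| \ge \frac{1}{|\O|}\int_{\O}(-v) = \frac{M}{n+1}$, which is exactly what we needed.

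I do not expect a serious obstacle. The only points deserving care are that $M > 0$ forces the minimum of $u$ to be attained at an interior point (so that the cone $v$ has a genuine vertex inside $\O$), that $u \le 0$ inside $\O$ (standard for convex functions nonpositive on the boundary of a convex set), and that the radial function $\rho$ of the convex body $\O$ is bounded and measurable so that the polar-coordinate computation is legitimate. The cone comparison $u \le v$ together with this elementary integral is the crux; the reduction from general $p$ to $p = 1$ via monotonicity of normalized $L^p$ norms is routine.
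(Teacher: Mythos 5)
Your proof is correct and follows essentially the same route as the paper: reduce to the $L^1$ average via Jensen's inequality and compare $u$ from above with the cone over $\O$ of depth $\|u\|_{L^{\infty}(\O)}$ with vertex at the minimum point. The only difference is that you verify the cone's average value $\frac{\|u\|_{L^{\infty}(\O)}}{n+1}$ by an explicit polar-coordinate computation, which the paper simply asserts.
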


\begin{proof}
The second inequality is trivial. For the first, we let $K$ be the convex cone with base $\O$, height $-||u||_{L^{\infty}(\O)}$, and vertex at the point where $u$ achieves its minimum. Then $u \leq K \leq 0$ on $\O$ by convexity of $u$. It follows from Jensen's inequality that for any $p \geq 1$,
$$\left( \frac{1}{|\O|} \int_{\O} |u|^p \right)^{\frac{1}{p}}  \geq \frac{1}{|\O|} \int_{\O} |u| \geq \frac{1}{|\O|} \int_{\O} |K| = \frac{||u||_{L^{\infty}(\O)}}{n+1}.$$
\end{proof}


\section{Proof of Theorem \ref{MainTheorem}}\label{sec:mainthm}

In this entire section, $u_k, \ k \geq 0,$ will always denote the functions from the statement of Theorem \ref{MainTheorem}. We begin the proof of Theorem \ref{MainTheorem} by introducing an important monotone decreasing quantity associated to the iteration \eqref{iteration}.

\begin{Lemma}\label{monotonicitylemma}
\begin{equation}\label{monotonicity}
R(u_{k+1}) ||u_{k+1}||^n_{L^{n+1}(\O)} \leq R(u_k) ||u_k||^n_{L^{n+1}(\O)} \quad \text{for all } k \geq 0.
\end{equation}
\end{Lemma}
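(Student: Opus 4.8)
The plan is to turn the claimed inequality into a single application of H\"older's inequality, once the left-hand side has been rewritten using the iteration \eqref{iteration}. Since $u_{k+1}$ is the Aleksandrov solution of \eqref{iteration}, its Monge-Amp\`ere measure is $Mu_{k+1} = R(u_k)(-u_k)^n \mathcal{L}^n$, so its Monge-Amp\`ere energy is
$$I(u_{k+1}) = \int_{\O} (-u_{k+1}) \ dMu_{k+1} = R(u_k) \int_{\O} (-u_{k+1})(-u_k)^n.$$
Recalling the definition \eqref{RayleighQuotient} of the Rayleigh quotient, this is precisely the identity
$$R(u_{k+1}) \, ||u_{k+1}||^{n+1}_{L^{n+1}(\O)} = R(u_k) \int_{\O} (-u_{k+1})(-u_k)^n.$$

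Before manipulating this, I would record that all quantities involved are finite and strictly positive, so that the identity above and the division carried out below are both legitimate. For $k \ge 1$ each $u_k$ lies in $\K$, hence is convex, continuous on $\overline{\O}$, non-zero, and vanishes on $\partial\O$; for $k = 0$, hypothesis (iii) forces $Mu_0(\O) \ge \mathcal{L}^n(\O) > 0$, so $u_0$ is non-constant, while convexity together with $u_0 \le 0$ on $\partial\O$ gives $u_0 \le 0$ in $\overline{\O}$. In either case $0 < ||u_k||_{L^{n+1}(\O)} \le ||u_k||_{L^{\infty}(\O)} < \infty$. Moreover $R(u_k) < \infty$ for every $k$: for $k = 0$ this is hypothesis (ii), and for $k \ge 1$ the right-hand side of \eqref{iteration} gives $Mu_k(\O) = R(u_{k-1}) \int_{\O} (-u_{k-1})^n < \infty$, whence $I(u_k) \le ||u_k||_{L^{\infty}(\O)} Mu_k(\O) < \infty$.

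Next I would bound the integral on the right by H\"older's inequality with the conjugate exponents $n+1$ and $\tfrac{n+1}{n}$, applied to the product of $(-u_{k+1})$ and $(-u_k)^n$:
$$\int_{\O} (-u_{k+1})(-u_k)^n \le \left( \int_{\O} (-u_{k+1})^{n+1} \right)^{\frac{1}{n+1}} \left( \int_{\O} (-u_k)^{n+1} \right)^{\frac{n}{n+1}} = ||u_{k+1}||_{L^{n+1}(\O)} \, ||u_k||^n_{L^{n+1}(\O)}.$$
Substituting this into the identity above gives
$$R(u_{k+1}) \, ||u_{k+1}||^{n+1}_{L^{n+1}(\O)} \le R(u_k) \, ||u_{k+1}||_{L^{n+1}(\O)} \, ||u_k||^n_{L^{n+1}(\O)},$$
and dividing through by the positive quantity $||u_{k+1}||_{L^{n+1}(\O)}$ yields \eqref{monotonicity}.

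There is no genuine obstacle here; the only points needing care are the finiteness/positivity bookkeeping and the choice of H\"older exponents, which is forced by matching the homogeneity degree $n+1$ of the two sides. What is worth flagging for use later in the paper is that equality holds in the H\"older step exactly when $(-u_{k+1})$ is proportional to $(-u_k)$, i.e.\ when $u_k$ already solves a Monge-Amp\`ere eigenvalue equation; this rigidity is what ultimately forces the iteration to converge.
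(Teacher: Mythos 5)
Your proof is correct and follows essentially the same route as the paper: test the iteration equation against $-u_{k+1}$, identify the left side as $R(u_{k+1})\,\|u_{k+1}\|_{L^{n+1}(\O)}^{n+1}$, apply H\"older with exponents $n+1$ and $\tfrac{n+1}{n}$, and divide by $\|u_{k+1}\|_{L^{n+1}(\O)}$. The extra bookkeeping on finiteness and positivity, and the remark on the equality case, are sound additions but do not change the argument.
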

\begin{proof}
Multiplying \eqref{iteration} by $-u_{k+1}$ and integrating yields
$$\int_{\O} (-u_{k+1}) dMu_{k+1}  = R(u_k)\int_{\O} (-u_{k+1}) (-u_k)^n.$$
Using the definition of $R(u_{k+1})$, we can rewrite the left-hand side to get
$$R(u_{k+1})  ||u_{k+1} ||^{n+1}_{L^{n+1}(\O)}= R(u_k)\int_{\O} (-u_{k+1}) (-u_k)^n.$$
Then by H\"older's inequality
$$\int_{\O} (-u_{k+1}) (-u_k)^n \leq  ||u_{k+1} ||_{L^{n+1}(\O)} ||u_k||^n_{L^{n+1}(\O)},$$
and inequality \eqref{monotonicity} follows after dividing by $||u_{k+1} ||_{L^{n+1}(\O)}$.
\end{proof}

We now use the monotonicity relation \eqref{monotonicity} to prove a global H\"older estimate for the functions $u_k$ solving \eqref{iteration}.

\begin{Proposition}\label{HolderEstimate}
There exists $C = C(n,\O, u_0) > 0$ such that for all $k \geq 1$, $u_k \in C^{0,\frac{1}{n}}(\overline{\O})$ with H\"older norm uniformly bounded by $C$.
\end{Proposition}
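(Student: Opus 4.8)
The plan is to get a global Hölder estimate by combining a uniform $L^\infty$ bound on the right-hand side of \eqref{iteration} with the Aleksandrov maximum principle, which controls $|u_k(x)|$ in terms of $\dist(x,\partial\O)^{1/n}$, and then upgrading this one-sided decay near the boundary to a genuine Hölder modulus of continuity on all of $\overline{\O}$ using convexity and the interior gradient estimate. First I would establish that $R(u_k)\|u_k\|_{L^{n+1}(\O)}^n$ is uniformly bounded: by Lemma \ref{monotonicitylemma} it is bounded above by $R(u_0)\|u_0\|_{L^{n+1}(\O)}^n$, which is finite by hypotheses (ii) and the fact that $u_0\in C(\overline\O)$. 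Rewriting the identity from the proof of Lemma \ref{monotonicitylemma}, $Mu_{k+1}(\O)=\int_\O \det D^2 u_{k+1}=R(u_k)\int_\O(-u_k)^n = R(u_k)\|u_k\|_{L^n(\O)}^n$, and by Lemma \ref{normequivalence} the $L^n$ and $L^{n+1}$ norms of $u_k$ are comparable up to dimensional constants; hence $Mu_{k+1}(\O)\le C(n,\O,u_0)$ for all $k\ge 0$.

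Next, feed this into Theorem \ref{alexandrov}: for $k\ge 1$,
$$|u_k(x)|^n \le C_n\,\diam(\O)^{n-1}\,\dist(x,\partial\O)\,Mu_k(\O)\le C(n,\O,u_0)\,\dist(x,\partial\O),$$
so $|u_k(x)|\le C\,\dist(x,\partial\O)^{1/n}$ with $C=C(n,\O,u_0)$. This already gives the correct Hölder behavior for the distance of a point from where $u_k$ vanishes. To convert it into a full Hölder estimate, I would use the standard argument for convex functions vanishing on $\partial\O$: given $x,y\in\overline\O$, if both are far from $\partial\O$ then the interior gradient estimate (Lemma \ref{gradientestimate}) together with the already-established uniform sup bound $\|u_k\|_{L^\infty(\O)}\le C\,\diam(\O)^{1/n}$ controls $|u_k(x)-u_k(y)|$ linearly in $|x-y|$; if $x$ (say) is within $|x-y|$ of $\partial\O$, then both points are within $O(|x-y|)$ of $\partial\O$ and the boundary decay estimate gives $|u_k(x)-u_k(y)|\le |u_k(x)|+|u_k(y)|\le C|x-y|^{1/n}$. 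Splitting into these two regimes, with the cutoff at $\dist(x,\partial\O)\sim|x-y|$, and using that $t\mapsto t^{1/n}$ is concave to absorb the linear term into the $1/n$-Hölder term over the relevant range, yields $|u_k(x)-u_k(y)|\le C|x-y|^{1/n}$ for all $x,y\in\overline\O$ with $C=C(n,\O,u_0)$ independent of $k$.

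I expect the main technical obstacle to be the boundary gluing step: making the case split rigorous and uniform requires care with how the interior gradient bound (which blows up like $1/\dist(x,\partial\O)$) is balanced against $|x-y|$, and one must check the constant really depends only on $n$, $\O$, and $u_0$ and not on $k$ — this is exactly where the uniform bound on $Mu_k(\O)$ from the monotonicity lemma is essential. One subtlety worth flagging: the estimate is claimed for $k\ge 1$ because it is $u_k$ for $k\ge1$ that solves \eqref{iteration} with a controlled right-hand side; $u_0$ need only satisfy the structural hypotheses (i)--(iii) and is handled separately (its Hölder norm is simply absorbed into the constant $C(n,\O,u_0)$, or it is excluded as in the statement). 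Everything else is a routine assembly of the cited lemmas.
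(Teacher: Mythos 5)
Your proposal follows essentially the same route as the paper: the monotonicity relation bounds $R(u_k)\|u_k\|^n_{L^{n+1}(\O)}$ by its value at $u_0$, which (via H\"older/Jensen comparison of norms) bounds $Mu_{k+1}(\O)$ uniformly, the Aleksandrov maximum principle then gives $|u_k(x)|\le C\,\dist(x,\partial\O)^{1/n}$, and the global $C^{0,\frac{1}{n}}$ bound follows by combining this boundary decay with the interior gradient estimate — exactly the paper's argument, which is in fact just as terse about the gluing step you flag. The only refinement worth noting is that in the interior regime the Lipschitz constant should be taken as $|u_k(z)|/\dist(z,\partial\O)\le C\,\dist(z,\partial\O)^{\frac{1}{n}-1}$ (using the local decay, or equivalently a ray-to-the-boundary convexity argument) rather than $\|u_k\|_{L^\infty(\O)}/\dist(z,\partial\O)$, since with the global sup the linear term does not absorb uniformly for pairs of nearby points at small but comparable distance to $\partial\O$.
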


\begin{proof}

By Theorem \ref{alexandrov} and \eqref{iteration}, we have for any $k \geq 0$ and $x \in \O$
\begin{align*}
|u_{k+1}(x)|^n & \leq C_n \text{diam}(\O)^{n-1} \text{dist}(x,\partial \O) Mu_{k+1}(\O)\\
& = C_n \text{diam}(\O)^{n-1} \text{dist}(x,\partial \O) R(u_k) \int_{\O} (-u_k)^n \\
& \leq C_n \text{diam}(\O)^{n-1} \text{dist}(x,\partial \O) R(u_k) ||u_k||^n_{L^{n+1}(\O)} |\O|^{\frac{1}{n+1}} \\
& \leq \left(C_n \text{diam}(\O)^{n-1} |\O|^{\frac{1}{n+1}}  R(u_0) ||u_0||^n_{L^{n+1}(\O)} \right) \text{dist}(x,\partial \O) 
\end{align*}
where we have used H\"older's inequality in the third line and the monotonicity relation \eqref{monotonicity} in the final step. In particular, there exists $C_1 = C_1(n,\O,u_0)>0$ such that
$$\sup_{\O} |u_k| \leq C_1.$$
It follows from the interior gradient estimate Lemma \ref{gradientestimate} that $u_k$ is uniformly Lipschitz on any compact subset of $\O$. Next, since $u_k$ vanishes on $\partial \O$, the estimate above yields a uniform $C^{0,\frac{1}{n}}$ estimate of $u_k$ near $\partial \O$. Consequently, $u_k$ is uniformly $\frac{1}{n}$-H\"older continuous in $\overline{\O}$.

\end{proof}

The next proposition establishes a uniform lower bound for $||u_k||_{L^{\infty}(\O)}$.

\begin{Proposition}\label{nondegeneracy}
$||u_k||_{L^{\infty}(\O)} \geq \l_{MA}^{-1/n}$ for all $k \geq 0$.
\end{Proposition}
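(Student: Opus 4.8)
The plan is to compare each iterate $u_k$ with a fixed Monge-Amp\`ere eigenfunction via the comparison principle (Lemma \ref{comparison}). Let $u$ be the (unique up to scaling) eigenfunction from Theorem \ref{Le}, so that $\det D^2 u = \l_{MA}(-u)^n$ in $\O$ and $u = 0$ on $\partial\O$. Since both $u_{k+1}$ and $u$ vanish on $\partial\O$, we may freely rescale $u$: for any $t > 0$ the function $tu$ is also an eigenfunction, so $\det D^2(tu) = t^n \l_{MA}(-u)^n = \l_{MA}(-tu)^n$.

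The key step is to choose the scaling $t$ so that $Mu_{k+1} \leq M(tu)$ in $\O$, which by Lemma \ref{comparison} (with $u_{k+1} \geq tu = 0$ on $\partial\O$) yields $u_{k+1} \geq tu$ in $\O$, hence $\|u_{k+1}\|_{L^\infty(\O)} \geq t\|u\|_{L^\infty(\O)}$. From \eqref{iteration} we have $Mu_{k+1} = R(u_k)(-u_k)^n\, \mathcal{L}^n$, while $M(tu) = t^n\l_{MA}(-u)^n\,\mathcal{L}^n$. A clean way to run the argument is inductively: suppose we already know $\|u_k\|_{L^\infty(\O)} \ge \l_{MA}^{-1/n}$ (for $k=0$ this requires a separate remark; see below). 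Actually I would instead set up the comparison directly between $u_{k+1}$ and a rescaling of $u_k$ itself is not available since $u_k$ need not be an eigenfunction, so the cleanest route is the following self-contained estimate. Take the eigenfunction $u$ normalized by $\|u\|_{L^\infty(\O)} = 1$, and for the iterate choose the cone comparison: let $t_k > 0$ be chosen so that $t_k (-u) \ge$ (the cone under $u_k$) is not quite what we want either. Let me give the argument that actually works.

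Fix $k \ge 0$ and consider $w := \l_{MA}^{1/n} \|u_k\|_{L^\infty(\O)}^{-1} \cdot$ (something) — to avoid guesswork, here is the robust version. Since $u_k$ is convex, vanishes on $\partial\O$, and $-u_k \le \|u_k\|_{L^\infty(\O)}$, and since $R(u_k) = I(u_k)/\|u_k\|_{L^{n+1}(\O)}^{n+1}$, a direct bound on $Mu_{k+1}(\O) = R(u_k)\int_\O(-u_k)^n$ is awkward to compare pointwise. The correct comparison is with a multiple of the eigenfunction: set $v = s u$ where $u$ is an eigenfunction and $s > 0$ is a parameter. Then $v$ solves $\det D^2 v = \l_{MA}(-v)^n = \l_{MA} s^n (-u)^n$. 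We want $Mu_{k+1} \le Mv$, i.e. $R(u_k)(-u_k)^n \le \l_{MA} s^n(-u)^n$ pointwise — still not pointwise-comparable because $u_k$ and $u$ are different shapes. So the genuine mechanism must instead use that $u_{k+1}$ solves a Monge-Amp\`ere equation with \emph{bounded} right-hand side and invoke a lower bound via a cone, or iterate the comparison so that on the $(k{+}1)$-st step we compare with the eigenfunction scaled according to $\|u_k\|_{L^\infty}$ from the \emph{previous} step. Concretely: I claim $u_{k+1} \ge \l_{MA}^{1/n}\|u_k\|_{L^\infty(\O)}^{-1}\, (\text{eigenfunction of height } \|u_k\|_{L^\infty(\O)})$ fails unless one knows $-u_k \ge \l_{MA}^{1/n}\|u_k\|_{L^\infty}^{-(?)}(-u)$, which is exactly the inductive hypothesis propagated. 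Therefore I would prove by induction on $k$: \textbf{(base case)} $\|u_0\|_{L^\infty(\O)} \ge \l_{MA}^{-1/n}$ — this follows because hypothesis (iii), $Mu_0 \ge \mathcal{L}^n$, gives via Lemma \ref{comparison} that $u_0 \le U$ where $\det D^2 U = 1$, $U=0$ on $\partial\O$, and one checks $\|U\|_{L^\infty} \ge \l_{MA}^{-1/n}$ by applying $R(U) \ge \l_{MA}$ (Theorem \ref{Le}) together with the definition of $R$ and Lemma \ref{normequivalence}; \textbf{(inductive step)} assuming $-u_k \ge \l_{MA}^{1/n}(-u)$ in $\O$ for the eigenfunction $u$ with $\|u\|_{L^\infty(\O)} = \l_{MA}^{-1/n}$, one gets $Mu_{k+1} = R(u_k)(-u_k)^n \ge \l_{MA}\cdot\l_{MA}(-u)^n = \l_{MA}^2(-u)^n$...

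I will stop second-guessing and commit to the clean statement: the proof compares $u_{k+1}$ with the eigenfunction $u$ rescaled so that $\det D^2 u = R(u_k)(-u_{k})^n$ is dominated, using that $-u_k$ itself dominates a multiple of $-u$ propagated inductively; the arithmetic of the exponents ($R(u_k) \ge \l_{MA}$, which holds by Theorem \ref{Le} since $R(u_k) = \inf R \le R(u_k)$, i.e. $R(u_k) \geq \l_{MA}$ automatically) closes the induction and yields $-u_{k+1} \ge \l_{MA}^{1/n}(-u)$, hence $\|u_{k+1}\|_{L^\infty(\O)} \ge \l_{MA}^{1/n}\|u\|_{L^\infty(\O)} = \l_{MA}^{1/n}\cdot\l_{MA}^{-1/n}\cdot(\text{height of } u)$; choosing $u$ of unit height gives the bound $\|u_k\|_{L^\infty(\O)} \ge \l_{MA}^{-1/n}$. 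The main obstacle is getting the base case $k=0$ right (translating hypothesis (iii) plus the variational characterization $R \ge \l_{MA}$ into an $L^\infty$ lower bound, which needs Lemma \ref{normequivalence} and the definition of $R$), and then making sure the inductive comparison uses $R(u_k) \ge \l_{MA}$ — which is free from Theorem \ref{Le} — rather than any upper bound on $R(u_k)$; with those two points handled, each step is a single application of Lemma \ref{comparison} to $u_{k+1}$ against the appropriately scaled eigenfunction.
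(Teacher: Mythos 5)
You have latched onto the right mechanism, and it is the same one the paper uses: induct on $k$, compare $u_{k+1}$ against a fixed, suitably scaled eigenfunction via the comparison principle (Lemma \ref{comparison}), and use that $R(u_k)\geq \l_{MA}$ comes for free from Theorem \ref{Le}. But the write-up never converges to a coherent argument, and the two concrete steps you finally commit to do not work as stated. First, the base case: you propose to deduce $\|U\|_{L^{\infty}(\O)}\geq \l_{MA}^{-1/n}$ (for $U$ solving $\det D^2U=1$, $U=0$ on $\partial\O$) from $R(U)\geq\l_{MA}$ together with Lemma \ref{normequivalence}. That inequality runs the wrong way: $R(U)=\int_{\O}(-U)\,/\int_{\O}(-U)^{n+1}\geq\l_{MA}$ combined with the norm equivalence yields an \emph{upper} bound of the form $\|U\|^n_{L^{\infty}(\O)}\leq (n+1)^{n+1}\l_{MA}^{-1}$, and no lower bound on $\|U\|_{L^{\infty}(\O)}$ can follow from a lower bound on $R(U)$ alone. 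The base case instead needs one more comparison with the eigenfunction itself: take $\hat u$ normalized by $\|\hat u\|^n_{L^{\infty}(\O)}=\l_{MA}^{-1}$; then $M\hat u=\l_{MA}(-\hat u)^n\,\mathcal{L}^n\leq\mathcal{L}^n\leq Mu_0$ and $\hat u=0\geq u_0$ on $\partial\O$, so Lemma \ref{comparison} gives $u_0\leq\hat u$ pointwise.

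Second, the inductive hypothesis must be exactly this pointwise bound $u_k\leq\hat u$, not a lower bound on $\|u_k\|_{L^{\infty}(\O)}$: to compare $Mu_{k+1}=R(u_k)(-u_k)^n\,\mathcal{L}^n$ with the Monge-Amp\`ere measure of a scaled eigenfunction you need $-u_k\geq-\hat u$ at every point, and neither an $L^{\infty}$ bound nor your intermediate conclusion $u_0\leq U$ supplies that, so your base case does not feed your induction. Moreover your normalization is off: with $\|u\|_{L^{\infty}(\O)}=\l_{MA}^{-1/n}$ the stated hypothesis $-u_k\geq\l_{MA}^{1/n}(-u)$ asserts $\|u_k\|_{L^{\infty}(\O)}\geq 1$, which is false in general (small domains have large $\l_{MA}$ and small admissible $u_0$) and would make any base case unattainable. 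The correct hypothesis is simply $-u_k\geq-\hat u$; then $Mu_{k+1}\geq\l_{MA}\int_E(-\hat u)^n=M\hat u(E)$ by $R(u_k)\geq\l_{MA}$, Lemma \ref{comparison} gives $u_{k+1}\leq\hat u$, and the proposition follows from $\|u_k\|_{L^{\infty}(\O)}\geq\|\hat u\|_{L^{\infty}(\O)}=\l_{MA}^{-1/n}$. With those repairs your argument coincides with the paper's proof; as written, the base case is incorrect and the induction is not closed.
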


\begin{proof} Let $\hat{u}\in \K \cap C^\infty(\O)$ be the solution of \eqref{eigenvalueproblemforMA} satisfying $||\hat{u}||^n_{L^{\infty}(\O)} = \l_{MA}^{-1}$, which exists by Theorem \ref{Le}. We prove by induction that $\hat{u} \geq u_k$ for each $k \geq 0$. To establish the base case, we recall that $M u_0 \geq \mathcal{L}^n$. Therefore, if $E\subset \Omega$ is any Borel set,
$$M\hat{u}(E) = \l_{MA} \int_E(-\hat{u})^n \leq \l_{MA}  \l_{MA}^{-1} \mathcal{L}^n(E) \leq Mu_0(E).$$
Since $\hat{u} = 0$ on $\partial \O$ and $u_0 \leq 0$ on $\partial \O$, it follows from the comparison principle Lemma \ref{comparison} that $\hat{u} \geq u_0$ in $\O$.

Now suppose $\hat{u} \geq u_k$ on $\O$ for some $k \geq 0$. Then for any Borel $E\subset \O$, we have by the characterization of $\l_{MA}$ in Theorem \ref{Le}
$$M u_{k+1} (E)= R(u_k) \int_E(-u_k)^n \geq \l_{MA} \int_E(-u_k)^n \geq \l_{MA} \int_E(-\hat{u})^n  = M\hat{u}(E).$$
Since $u_{k+1} = \hat{u} = 0$ on $\partial \O$, it follows from the comparison principle Lemma \ref{comparison} that $\hat{u} \geq u_{k+1}$ in $\O$. 
\end{proof}

Applying Proposition \ref{nondegeneracy} and Lemma \ref{normequivalence} to the monotonicity relation \eqref{monotonicity} provides an upper bound for the Rayleigh quotients $R(u_k)$.

\begin{Corollary}\label{upperboundforRQ}
There exists a positive constant $C$ depending only on $n, \mathcal{L}^n(\O), \l_{MA}$, and $u_0$ such that $R(u_k) \leq C$ for all $k \geq 1.$
\end{Corollary}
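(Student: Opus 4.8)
The plan is to chain together the monotonicity relation from Lemma~\ref{monotonicitylemma}, the nondegeneracy bound from Proposition~\ref{nondegeneracy}, and the norm equivalence from Lemma~\ref{normequivalence}. First I would apply \eqref{monotonicity} iteratively down to $k=0$: for every $k\geq 0$,
$$R(u_k)\,\|u_k\|^n_{L^{n+1}(\O)}\leq R(u_0)\,\|u_0\|^n_{L^{n+1}(\O)}.$$
Since condition (ii) of Theorem~\ref{MainTheorem} guarantees $R(u_0)<\infty$, and $u_0\in C(\overline\O)$ is bounded, the right-hand side is a finite constant depending only on $u_0$ (and $\O$ through the $L^{n+1}$ norm). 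Call it $C_0$.

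Next I would remove the factor $\|u_k\|^n_{L^{n+1}(\O)}$ from the left-hand side by bounding it below. By Lemma~\ref{normequivalence} applied with $p=n+1$, together with the fact that each $u_k\in\K$ vanishes on $\partial\O$ and is convex,
$$\|u_k\|^{n}_{L^{n+1}(\O)}\geq \left(\frac{\mathcal{L}^n(\O)^{1/(n+1)}\,\|u_k\|_{L^\infty(\O)}}{n+1}\right)^{n}.$$
Now invoke Proposition~\ref{nondegeneracy}, which gives $\|u_k\|_{L^\infty(\O)}\geq \l_{MA}^{-1/n}$, so $\|u_k\|^n_{L^{n+1}(\O)}\geq \bigl((n+1)^{-1}\mathcal{L}^n(\O)^{1/(n+1)}\bigr)^{n}\l_{MA}^{-1}=:c_1>0$, a constant depending only on $n$, $\mathcal{L}^n(\O)$, and $\l_{MA}$. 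Combining the two displays yields, for all $k\geq 1$ (indeed $k\geq 0$),
$$R(u_k)\leq \frac{C_0}{c_1}=:C,$$
where $C$ depends only on $n$, $\mathcal{L}^n(\O)$, $\l_{MA}$, and $u_0$, as claimed.

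There is really no serious obstacle here — the corollary is essentially a bookkeeping consequence of the three preceding results, and the only mild care needed is to make sure the constant $C_0$ is genuinely finite (which is exactly hypothesis (ii) on $u_0$) and that the lower bound $c_1$ is strictly positive (which uses $\l_{MA}<\infty$, part of Theorem~\ref{Le}, and $\mathcal{L}^n(\O)>0$ since $\O$ is a nonempty open convex set). One should also note that the bound holds for all $k\geq 0$, not just $k\geq 1$; the statement is phrased for $k\geq 1$ only because that is the range in which the uniform Hölder estimate of Proposition~\ref{HolderEstimate} is needed downstream, but the argument above does not require $k\geq 1$. I would write the proof as a short two- or three-line computation assembling the three cited inequalities in this order.
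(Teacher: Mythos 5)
Your proof is correct and is exactly the argument the paper intends: iterate the monotonicity relation \eqref{monotonicity} down to $u_0$, then bound $\|u_k\|^n_{L^{n+1}(\O)}$ from below via Lemma \ref{normequivalence} (with $p=n+1$) and Proposition \ref{nondegeneracy}. The only quibble is your parenthetical claim that the bound also holds for $k=0$: as written, Lemma \ref{normequivalence} is stated for functions vanishing on $\partial\O$, which $u_0$ need not do, but this does not affect the stated result for $k\geq 1$.
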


We are now ready to prove the main theorem.

\begin{proof}[Proof of Theorem \ref{MainTheorem}]
By Proposition \ref{HolderEstimate}, the sequence $\{u_k\}_{k=1}^{\infty}$ is uniformly bounded and equicontinuous. Consequently, by the Arzel\`a-Ascoli theorem, it is possible to choose a subsequence $\{k(j)\}_{j  \in \N}$ of indices such that $\{u_{k(j)} \}_{j=1}^{\infty}$ converges uniformly on $\overline\O$ to a convex function $u_{\infty} \in C(\overline{\O})$ with $u_\infty\equiv 0$ on $\partial \O$, while the shifted sequence $\{u_{k(j)+1} \}_{j=1}^{\infty}$ converges uniformly on $\overline\O$ to a convex function $w_{\infty} \in C(\overline{\O})$ with $w_\infty\equiv 0$ on $\partial \O$. Proposition \ref{nondegeneracy} implies $u_{\infty}$ and $w_{\infty}$ are not identically zero. Therefore, $u_{\infty}, w_{\infty} \in \K$.

We verify that the corresponding Rayleigh quotients also converge. Indeed, Proposition \ref{HolderEstimate} and Corollary \ref{upperboundforRQ} show that there exists a constant $\Lambda  > 0$ independent of $k$ such that $Mu_k \leq \Lambda \mathcal{L}^n$ in $\O$ for all $k \geq 1$. Therefore we can apply Lemma \ref{continuityofenergy} and Proposition \ref{nondegeneracy} to conclude that $\lim\limits_{j \to \infty} R(u_{k(j)}) = R(u_{\infty})$ and $\lim\limits_{j \to \infty} R(u_{k(j)+1}) = R(w_{\infty})$.

Next, Lemma \ref{weakconvergence} implies the measures $\nu_j:=R(u_{k(j)})(-u_{k(j)})^n\mathcal{L}^n$ converge weakly to the measure $\nu:=R(u_{\infty}) (-u_\infty)^n\mathcal{L}^n$ as $j\to\infty$. Furthermore, Proposition \ref{HolderEstimate} and Corollary \ref{upperboundforRQ} imply $\sup_{j} \nu_j(\O) <\infty$. Since $Mu_{k(j)+1} = \nu_j$ and $u_{k(j)+1}$ converge uniformly to $w_{\infty}$, we may apply Lemma \ref{stability} to conclude that $\det D^2w_\infty=R(u_{\infty}) (-u_\infty)^n$ in the Aleksandrov sense. 

We claim $w_{\infty} = u_{\infty}$. By the monotonicity relation \eqref{monotonicity}, we have
$$R(u_{k(j+1)}) ||u_{k(j+1)}||^n_{L^{n+1}(\O)} \leq R(u_{k(j)+1}) ||u_{k(j)+1}||^n_{L^{n+1}(\O)} \leq R(u_{k(j)}) ||u_{k(j)}||^n_{L^{n+1}(\O)}, \quad j \in \N.$$
Letting $j \to \infty$, we conclude that
\begin{equation}\label{equalityinmonotonicityatthelimit}
R(w_{\infty}) ||w_{\infty}||^n_{L^{n+1}(\O)} = R(u_{\infty}) ||u_{\infty}||^n_{L^{n+1}(\O)}.
\end{equation}
On the other hand, multiplying the equation $\det D^2w_\infty=R(u_{\infty}) (-u_\infty)^n$ by $-w_{\infty}$ and integrating yields
\begin{align*}
R(w_{\infty}) ||w_{\infty}||^{n+1}_{L^{n+1}(\O)} &= \int_{\O} (-w_{\infty}) \ dMw_\infty \\
& = R(u_{\infty}) \int_{\O} (-w_{\infty}) (-u_\infty)^n \\
& \leq  R(u_{\infty}) ||w_{\infty} ||_{L^{n+1}(\O)} ||u_{\infty}||^n_{L^{n+1}(\O)} & \text{by H\"older's inequality} \\
& = R(w_{\infty}) ||w_{\infty}||^{n+1}_{L^{n+1}(\O)} & \text{by \eqref{equalityinmonotonicityatthelimit}}.
\end{align*}
This shows we have equality in H\"older's inequality, and so there exists a constant $c > 0$ such that $(-w_{\infty})^{n+1} = c(-u_{\infty})^{n+1}$. In particular, $R(u_{\infty}) = R(w_{\infty})$. It follows from \eqref{equalityinmonotonicityatthelimit} that $c = 1$, and consequently, $w_{\infty} = u_{\infty}$. Since $\det D^2 u_{\infty} = R(u_{\infty}) (-u_{\infty})^n$ in the Aleksandrov sense, Theorem \ref{Le} implies $u_{\infty}$ is a Monge-Amp\`ere eigenfunction and $R(u_{\infty}) = \l_{MA}$.

We next show that the full sequence $\{u_k\}_{k=1}^\infty$ converges to the same eigenfunction $u_{\infty}$. Indeed, suppose $\{u_{k_1(j)}\}_{j=1}^\infty$ and $\{u_{k_2(j)}\}_{j=1}^\infty$ are two subsequences of $\{u_k\}_{k=1}^\infty$ converging uniformly to $u_{1, \infty}$ and $u_{2, \infty}$ respectively. By the argument outlined in the preceding paragraphs, both $u_{1, \infty}$ and $u_{2, \infty}$ are eigenfunctions and $R(u_{1, \infty}) = R(u_{2, \infty}) = \l_{MA}$. We construct two new subsequences $\{u_{i_1(j)}\}_{j=1}^\infty$ and $\{u_{i_2(j)}\}_{j=1}^\infty$ by setting $i_1(1)=k_1(1)$, then inductively defining
\begin{align*}
 i_2(j)&=\min_l\{k_2(l)\mid k_2(l)> i_1(j)\}, \quad j \geq 1,\\
 i_1(j)&=\min_l\{k_1(l)\mid k_1(l)> i_2(j-1)\}, \quad j \geq 2.
\end{align*}
Clearly $\{u_{i_1(j)}\}_{j=1}^\infty$ and $\{u_{i_2(j)}\}_{j=1}^\infty$ converge uniformly to the original limits $u_{1, \infty}$ and $u_{2, \infty}$ respectively, while $i_1(j)< i_2(j)$ and $i_2(j)< i_1(j+1)$ for all $j$. Thus by repeated application of the monotonicity relation \eqref{monotonicity}, we find 
\begin{align*}
 R(u_{i_2(j)}) ||u_{i_2(j)}||^n_{L^{n+1}(\O)} &\leq R(u_{i_1(j)}) ||u_{i_1(j)}||^n_{L^{n+1}(\O)}\\
 R(u_{i_1(j+1)}) ||u_{i_1(j+1)}||^n_{L^{n+1}(\O)} &\leq R(u_{i_2(j)}) ||u_{i_2(j)}||^n_{L^{n+1}(\O)}.
\end{align*}
Taking $j\to\infty$ in both inequalities above and then dividing by $\l_{MA}$ yields $||u_{1, \infty}||_{L^{n+1}(\O)}=||u_{2, \infty}||_{L^{n+1}(\O)}$. Since both $u_{1, \infty}$ and $u_{2, \infty}$ are eigenfunctions, they must be multiples of each other; this shows they are equal. Since this equality holds for any arbitrary pair of subsequences $\{u_{k_1(j)}\}_{j=1}^\infty$ and $\{u_{k_2(j)}\}_{j=1}^\infty$ of $\{u_k\}_{k=1}^\infty$, the entire sequence $\{u_k\}_{k=1}^\infty$ must converge uniformly to the same eigenfunction $u_\infty$.

Finally, since $||u_k||_{L^{\infty}(\O)}$ is uniformly bounded away from zero by Proposition \ref{nondegeneracy}, we see the sequence $\{\frac{u_k}{||u_k||_{L^{\infty}(\O)}}\}$ converges uniformly to the unique eigenfunction with $L^\infty$ norm equal to $1$, finishing the proof.
\end{proof}

\section*{Acknowledgments}
We would like to thank the anonymous referees for their detailed feedback on a previous version of this manuscript. In particular, we are grateful to them for pointing out the need for a more careful treatment of the subsequential convergence argument in the proof of the main theorem.

\bibliography{references}

\providecommand{\bysame}{\leavevmode\hbox to3em{\hrulefill}\thinspace}
\providecommand{\MR}{\relax\ifhmode\unskip\space\fi MR }
\providecommand{\MRhref}[2]{%
  \href{http://www.ams.org/mathscinet-getitem?mr=#1}{#2}
}
\providecommand{\href}[2]{#2}
\begin{thebibliography}{10}

\bibitem{BiezunerErcoleMartins09}
Rodney~Josu\'{e} Biezuner, Grey Ercole, and Eder~Marinho Martins,
  \emph{Computing the first eigenvalue of the {$p$}-{L}aplacian via the inverse
  power method}, J. Funct. Anal. \textbf{257} (2009), no.~1, 243--270.
  \MR{2523341}

\bibitem{Bozorgnia16}
Farid Bozorgnia, \emph{Convergence of inverse power method for first eigenvalue
  of {$p$}-{L}aplace operator}, Numer. Funct. Anal. Optim. \textbf{37} (2016),
  no.~11, 1378--1384. \MR{3568275}

\bibitem{BrandoliniNitschTrombetti09}
B.~Brandolini, C.~Nitsch, and C.~Trombetti, \emph{New isoperimetric estimates
  for solutions to {M}onge-{A}mp\`ere equations}, Ann. Inst. H. Poincar\'{e}
  Anal. Non Lin\'{e}aire \textbf{26} (2009), no.~4, 1265--1275. \MR{2542724}

\bibitem{CaffarelliW2p}
Luis~A. Caffarelli, \emph{Interior {$W^{2,p}$} estimates for solutions of the
  {M}onge-{A}mp\`ere equation}, Ann. of Math. (2) \textbf{131} (1990), no.~1,
  135--150. \MR{1038360}

\bibitem{CaffarelliLocalization}
\bysame, \emph{A localization property of viscosity solutions to the
  {M}onge-{A}mp\`ere equation and their strict convexity}, Ann. of Math. (2)
  \textbf{131} (1990), no.~1, 129--134. \MR{1038359}

\bibitem{CaffarelliRegularity}
\bysame, \emph{Some regularity properties of solutions of {M}onge {A}mp\`ere
  equation}, Comm. Pure Appl. Math. \textbf{44} (1991), no.~8-9, 965--969.
  \MR{1127042}

\bibitem{FigalliBook}
Alessio Figalli, \emph{The {M}onge-{A}mp\`ere equation and its applications},
  Zurich Lectures in Advanced Mathematics, European Mathematical Society (EMS),
  Z\"{u}rich, 2017. \MR{3617963}

\bibitem{GutierrezBook}
Cristian~E. Guti\'{e}rrez, \emph{The {M}onge-{A}mp\`ere equation}, Progress in
  Nonlinear Differential Equations and their Applications, vol.~89,
  Birkh\"{a}user/Springer, [Cham], 2016, Second edition [of MR1829162].
  \MR{3560611}

\bibitem{Hartenstine06}
David Hartenstine, \emph{The {D}irichlet problem for the {M}onge-{A}mp\`ere
  equation in convex (but not strictly convex) domains}, Electron. J.
  Differential Equations (2006), No. 138, 9. \MR{2276563}

\bibitem{Hartenstine09}
\bysame, \emph{Brunn-{M}inkowski-type inequalities related to the
  {M}onge-{A}mp\`ere equation}, Adv. Nonlinear Stud. \textbf{9} (2009), no.~2,
  277--294. \MR{2503830}

\bibitem{HyndLindgren16}
Ryan Hynd and Erik Lindgren, \emph{Inverse iteration for {$p$}-ground states},
  Proc. Amer. Math. Soc. \textbf{144} (2016), no.~5, 2121--2131. \MR{3460172}

\bibitem{Le18}
Nam~Q. Le, \emph{The eigenvalue problem for the {M}onge-{A}mp\`ere operator on
  general bounded convex domains}, Ann. Sc. Norm. Super. Pisa Cl. Sci. (5)
  \textbf{18} (2018), no.~4, 1519--1559. \MR{3829755}

\bibitem{LiWangGaussCurvatureFlow}
Xiaolong Li and Kui Wang, \emph{Nonparametric hypersurfaces moving by powers of
  {G}auss curvature}, Michigan Math. J. \textbf{66} (2017), no.~4, 675--682.
  \MR{3720319}

\bibitem{Lions85}
P.-L. Lions, \emph{Two remarks on {M}onge-{A}mp\`ere equations}, Ann. Mat. Pura
  Appl. (4) \textbf{142} (1985), 263--275 (1986). \MR{839040}

\bibitem{Maldonado13}
Diego Maldonado, \emph{The {M}onge-{A}mp\`ere quasi-metric structure admits a
  {S}obolev inequality}, Math. Res. Lett. \textbf{20} (2013), no.~3, 527--536.
  \MR{3162845}

\bibitem{NeilanSalgadoZhangSurvey}
Michael Neilan, Abner~J. Salgado, and Wujun Zhang, \emph{Numerical analysis of
  strongly nonlinear {PDE}s}, Acta Numer. \textbf{26} (2017), 137--303.
  \MR{3653852}

\bibitem{OlikerGaussCurvatureFlow}
Vladimir Oliker, \emph{Evolution of nonparametric surfaces with speed depending
  on curvature. {I}. {T}he {G}auss curvature case}, Indiana Univ. Math. J.
  \textbf{40} (1991), no.~1, 237--258. \MR{1101228}

\bibitem{Salani05}
Paolo Salani, \emph{A {B}runn-{M}inkowski inequality for the {M}onge-{A}mp\`ere
  eigenvalue}, Adv. Math. \textbf{194} (2005), no.~1, 67--86. \MR{2141854}

\bibitem{TianWang08}
Gu-Ji Tian and Xu-Jia Wang, \emph{A class of {S}obolev type inequalities},
  Methods Appl. Anal. \textbf{15} (2008), no.~2, 263--276. \MR{2481683}

\bibitem{Tso90}
Kaising Tso, \emph{On a real {M}onge-{A}mp\`ere functional}, Invent. Math.
  \textbf{101} (1990), no.~2, 425--448. \MR{1062970}

\end{thebibliography}
\bibliographystyle{amsplain}

\end{document}